\newcommand\nbOne{{\mathchoice {\rm 1\mskip-4mu l} {\rm 1\mskip-4mu l}
{\rm 1\mskip-4.5mu l} {\rm 1\mskip-5mu l}}}
\newcommand\R{\mathcal{M}_{p}(\mathbb{R}_{+}^{*})}
\newcommand\Rp{\mathbb{R}_{+}}
\newcommand\Rpe{\mathbb{R}_{+}^{*}}
\newcommand\gd{\rangle}
\newcommand\pt{\langle}
\newtheorem{theo}{Theorem}
\newtheorem{prop}[theo]{Proposition}
\newtheorem{cor}[theo]{Corollary}
\newtheorem{lm}{Lemma}
\newtheorem{rk}{Remark}
\newtheorem{df}{Definition}
\newenvironment{lemma1.5}{\textbf{Lemma 1.5}\itshape}
\newenvironment{mat}{\textbf{Malthusian Hypotheses.}\itshape}
\title{Self-similar branching Markov chains}
\author{Nathalie Krell}
\begin{document}
\maketitle

\begin{center}
\it{Laboratoire de Probabilités et Modèles Aléatoires,\\
 Universit\'e Pierre et Marie Curie,\\ 175 rue du Chevaleret,
75013 Paris, France.}
\end{center}

\vspace*{0.8cm}

\begin{abstract}
The main purpose of this work is to  study  self-similar branching Markov chains.
First we will construct  such a process. Then we will establish certain Limit Theorems using the theory of self-similar Markov processes.

\end{abstract}

\bigskip
\noindent{\bf Key Words. } Branching process, Self-similar  Markov process,
 Tree of generations, Limit Theorems.

\bigskip
\noindent{\bf A.M.S. Classification. }   60J80, 60G18, 60F25, 60J27.

\bigskip
\noindent{\bf e-mail. } krell@ccr.jussieu.fr

\vspace*{0.8cm}

\section{Introduction.}

 This work is a contribution to the study of a special type of branching Markov chains. We will construct a continuous time branching chain
 $\mathbf{X}$ which has a self-similar property and which takes its values in the space of finite point measures of $\mathbb{R}_{+}^{*}$.
  This type of process is a generalization of a self-similar fragmentation (see \cite{ber}), which may apply to cases where the size models
  non additive quantities as e.g. surface
energy in aerosols. We will focus on the case where the index of
self-similarity $\alpha$ is non-negative, which means that the
bigger individuals will reproduce faster than the smaller ones.
There is no loss of generality by considering this model, as the map
$x\rightarrow x^{-1}$ on atoms in $\Rpe$ transforms a self-similar
process with index  $\alpha$ into another one with index $-\alpha$
(and preserves the Markov property).

In this article we choose to construct  the process by bare  hand.
We extend the method used in \cite{ber} to deal with more general
processes where we allow  an individual to have a mass bigger than
that of its parent. We will explain in the sequel, which
difficulties this new set-up entails.
 There exists closely related  articles about branching
processes, like among others \cite{kyp1999}, \cite{kyp2000} from
Kyprianou  and \cite{cha0}, \cite{cha} from Chauvin. However notice
that the time of splitting of the process depends on the size of the
atoms of the process.

More precisely  we will first introduce a branching Markov chains as
a marked tree and we will obtain  a process, indexed by generations
(it is simply a random mark on the tree of generation, see Section
\ref{sectionarbredegeneration}). Thanks to a martingale which is
associated to the latter and the theory of random stopping lines on
a tree of generation, we will define the process indexed by time.
After having constructed the process, we will study the evolution of
the randomly chosen branch of the chain, from which we shall deduce
some Limit
 Theorems, relying on the theory of self-similar Markov processes.
In an appendix we will  consider the intrinsic process and give some
properties in the spirit of the article of Jagers \cite{jag}. By the
way we will show properties about the earlier martingale.

\section{The marked tree.}\label{sectionarbredegeneration}
In this part we will introduce a branching Markov chain as a marked
tree, which gives a genealogic description of the process that we
will construct. This terminology comes from Neveu in \cite{nev} even
if here the marked tree we consider is slightly different. First we
introduce some notations and definitions.

A finite point measure on $\Rpe$ is a finite sum of Dirac point
masses $\mathbf{s}=\sum_{i=1}^{n}\delta_{ s_{i}}$, where the $s_{i}$
are called the atoms of $\mathbf{s}$ and $n\geq 0$ is an arbitrary
integer. We shall often write $\sharp \mathbf{s}=n=\mathbf{s}(\Rpe)$
for the number of atoms of $\mathbf{s}$, and $\R$ for the space of
finite point measures on $\Rpe$. We also define for
$f:\Rp^{*}\rightarrow \mathbb{R}$ measurable function and
$\mathbf{s}\in\R$\[\langle f,\mathbf{s}\rangle:=\sum_{i=1}^{\sharp
s} f(s_{i}),\]
 by taking the sum over the atoms of $\mathbf{s}$ repeated according to their  multiplicity and  we will sometimes use the slight abuse of notation
 \[\langle f(x),\mathbf{s}\rangle :=\sum_{i=1}^{\sharp s} f(s_{i})\]
when $f$ is defined as a function depending on the variable $x$.
 We endow the space $\R$ with the topology of weak convergence, which means that $\mathbf{s}_{n}$ converge to $\mathbf{s}$ if and only if   $\langle f,\mathbf{s}_{n}\rangle $ converge to $\langle f,\mathbf{s}\rangle$ for all continuous bounded functions $f$.

Let $\alpha\geq 0$ be an index of self-similarity and $\nu$ be some
\textbf{probability measure} on $\R$. The aim of this
 work is to construct a  branching Markov chain $\mathbf{X}=((\sum_{i=1}^{\sharp \mathbf{X}(t)} \delta_{X_{i}(t)})_{t\geq
0})$  with values in  $\R$, which is self-similar with   index
$\alpha$ and has reproduction law $\nu$. The index of
self-similarity will play a part in the rate at which an individual
will reproduce and the reproduction law $\nu$ will specify the
distribution of the offspring.  We  stress that our setting includes
the case when
\begin{eqnarray}\nu(\exists i\ : \  s_{i}>1)>0,\label{fargmentplusgrand}\end{eqnarray}  which means that with a positive probability the size of
 a daughter can exceed that of her mother.

 To do that, exactly as described in Chapter 1
section 1.2.1 of \cite{ber}, we will construct a marked tree.

 We consider the Ulam Harris
labelling system
$$\mathcal{U} := \cup_{n=0}^{\infty}\mathbb{N}^{n},$$  with the
notation $\mathbb{N}=\{ 1, 2,...\}$ and $\mathbb{N}^{0} =
\{\emptyset\}$. In the sequel the elements of $\mathcal{U}$ are
called nodes (or sometimes also individuals) and the distinguished
node $\emptyset$ the root.  For each $u = (u_{1}, . . . , u_{n})\in
\mathcal{U}$, we call $n$ the \textit{generation} of $u$ and write
$|u| = n$, with the obvious convention $|\emptyset| = 0$. When $n
\geq 0$, $u = (u_{1}, . . . , u_{n})\in\mathbb{N}^{n}$ and
$i\in\mathbb{N}$,  we write $ui = (u_{1}, . . . , u_{n}, i)\in
\mathbb{N}^{n+1}$ for the $i$-th child of $u$. We also define for $u
= (u_{1}, . . . , u_{n})$ with $n\geq 2$,
$$mu=(u_{1}, . . . , u_{n-1})$$the \textit{mother} of $u$, $mu=\emptyset$ if $u\in \mathbb{N}$. If  $v=m^{n}u$ for some $n\geq 0$ we write  $v\preceq u$ and say that $u$
\textit{stems} from $v$. Additionally for $M$ a set of
$\mathcal{U}$, $M \preceq v$ means that $u\preceq v$ for some $u\in
M$. Generally we write $M\preceq L$ if all $x\in L$ stem from $M$.

Here it will be convenient to identify the point measure
$\mathbf{s}$ with the infinite sequence $(s_{1}, ..., s_{n},0,...)$
obtained by aggregation of infinitely many 0's to the finite
sequence of the atoms of $\mathbf{s}$.

In particular we say that a random infinite sequence
$(\xi_{i},i\in\mathbb{N})$ has the law $\nu$, if there is a (random)
index $n$ such that $\xi_{i}=0 \Leftrightarrow i>n$ and the finite
point measure $\sum_{i=1}^{n}\delta_{\xi_{i}}$ has the law $\nu$.
\begin{df}\label{def1}
Let two independent families of i.i.d. variables be indexed by the
nodes of the tree, $(\overline{\xi}_{u}, u\in\mathcal{U})$ and
$(\mathbf{e}_{u}, u \in\mathcal{U})$, where for each
$u\in\mathcal{U}$
$\overline{\xi}_{u}=(\widetilde{\xi}_{ui})_{i\in\mathbb{N}}$ is
distributed according to the law $\nu$, and
$(\mathbf{e}_{ui})_{i\in\mathbb{N}}$ is a sequence of i.i.d.
exponential variables with parameter 1. We define recursively for
some fixed $x>0$
$$\xi_{\emptyset}: =x,\ \ \ \  a_{\emptyset} := 0 ,\
\ \ \ \  \zeta_{\emptyset} := x^{-\alpha}\mathbf{e}_{\emptyset},$$
and for $u\in\mathcal{U}$  and $i\in\mathbb{N}$:
$$\xi_{ui} :=
\widetilde{\xi}_{ui}\xi_{u},\ \ \ \  a_{ui}: = a_{u} + \zeta_{u} ,\
\ \ \ \  \zeta_{ui} := \xi_{ui}^{-\alpha}\mathbf{e}_{ ui}.$$
To each node $u$  of the  tree $\mathcal{U}$, we associate  the
mark $(\xi_{u}, a_{u}, \zeta_{u})$ where $\xi_{u}$ is the size,
$a_{u}$ the birth-time and $\zeta_{u}$ the lifetime of the
individual with label $u$. We call $$T_{x}=((\xi_{u}, a_{u}, \zeta_{u})_{u\in\mathcal{U}})$$ a marked tree with root of size $x$, and the law associated is denoted by $\mathbb{P}_{x}$. Let $\Bar{\Omega}$ be the set of all the possible marked trees.

\end{df}

The size of the individuals $(\xi_{u}, u\in\mathcal{U})$ defines a
 multiplicative cascade (see the references in Section 3 of
\cite{ber2006}). However the latter is not sufficient to construct
the process $\mathbf{X}$, in fact we also need the information given
by $( (a_{u}, \zeta_{u}), u\in\mathcal{U})$.

Another useful concept is that of  \textit{ line}. A subset
$L\subset \mathcal{U}$ is a  line if for every $u,v\in L$, $u\preceq
v\Rightarrow u =v$. The \textit{pre-L-sigma algebra} is $$
\mathcal{H}_{L}:= \sigma (\widetilde{\xi}_{u},\mathbf{e}_{ u};
\exists l\in L :   u\preceq l).$$ A random set of individuals
$$\mathcal{J}:\Bar{\Omega}\rightarrow \mathcal{P}(\mathcal{U})$$ is
\textit{optional} if $\{\mathcal{J}\preceq L\}\in \mathcal{H}_{L}$
for all line $L \subset \mathcal{U}$, where $
\mathcal{P}(\mathcal{U})$ is the power set of  $\mathcal{U}$ . An
\textit{optional line} is a random line which is optional. For any
optional set $\mathcal{J}$ we define  the pre-$\mathcal{J}$-algebra
by:
$$A\in\mathcal{H}_{\mathcal{J}} \Leftrightarrow \forall L\ \ \hbox{line}\  \ \subset \mathcal{U}: A \cap \{\mathcal{J} \preceq L\}\in \mathcal{H}_{L}.$$

The first result is:

\begin{lm}\label{lemmearbre}
The marked tree constructed in Definition \ref{def1} satisfies the
strong Markov branching property:
 for $\mathcal{J}$  an optional  line and $
\varphi_{u}:\Bar{\Omega}\rightarrow [0,1],\ u\in \mathcal{U}$,  measurable functions, we get that,
$$ \left.\mathbb{E}_{1}\left(\prod_{u\in \mathcal{J}}\varphi_{u}\circ  T^{\xi_{u}}
\right|\mathcal{H}_{\mathcal{J}}\right)=\prod_{u\in \mathcal{J}}
\mathbb{E}_{\xi_{u}}(\varphi_{u}),$$ where $T^{\xi_{u}}$ is  the
marked tree extracted  from $T_{1}$ at the node $(\xi_{u}, a_{u},
\zeta_{u})$. More precisely
$$T^{\xi_{u}}=((\xi_{uv}, a_{uv}-a_{u}, \zeta_{uv})_{v\in\mathcal{U}}).$$
\end{lm}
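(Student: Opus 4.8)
The plan is to establish the identity first for a fixed deterministic line $L$ and then to bootstrap to an arbitrary optional line by a discretization argument. The engine of the whole proof is a scaling observation which I would isolate at the outset: for a node $u$, the extracted tree $T^{\xi_u}$ is a measurable function of the size $\xi_u$ together with the ``fresh'' block of marks $\Theta_u:=(\mathbf{e}_u,\overline{\xi}_u)\cup(\widetilde{\xi}_w,\mathbf{e}_w)_{w\succ u}$ attached to $u$ and its strict descendants. Reading off the recursion of Definition \ref{def1}, given the value $\xi_u=y$ the tree built from $\Theta_u$ has root size $y$, root lifetime $y^{-\alpha}\mathbf{e}_u$, and offspring relative sizes governed by $\nu$, so it is distributed exactly as $T_y$ under $\mathbb{P}_y$. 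Since $\Theta_u$ is independent of the marks of the ancestors of $u$, this gives $\mathbb{E}_1(\varphi(T^{\xi_u})\mid \xi_u, \text{ancestral marks})=\mathbb{E}_{\xi_u}(\varphi)$.

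For a fixed line $L$ I would then argue as follows. The size $\xi_u=\prod_{\emptyset\prec v\preceq u}\widetilde{\xi}_v$ of each $u\in L$ is $\mathcal{H}_L$-measurable, since all the $v\preceq u$ satisfy $v\preceq u\in L$. Because $L$ is an antichain, for distinct $u,u'\in L$ the blocks $\Theta_u,\Theta_{u'}$ are disjoint sub-families of the two independent i.i.d.\ families, so the subtrees $(T^{\xi_u})_{u\in L}$ are mutually independent and, once we condition on the sizes, each is independent of $\mathcal{H}_L$. Conditioning on $\mathcal{H}_L$ and applying the scaling observation to each factor yields $\mathbb{E}_1(\prod_{u\in L}\varphi_u\circ T^{\xi_u}\mid\mathcal{H}_L)=\prod_{u\in L}\mathbb{E}_{\xi_u}(\varphi_u)$. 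The point to check with care is that the only mark of the node $u$ feeding into $T^{\xi_u}$ and simultaneously lying in $\mathcal{H}_L$ should be the one fixing the scaling parameter $\xi_u$, so that the internal randomness of each subtree is genuinely independent of the pre-line field; this is exactly where the precise bookkeeping of which $(\widetilde{\xi},\mathbf{e})$ sit in $\mathcal{H}_L$, and in particular the status of the root lifetime $\zeta_u$, must be settled.

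To pass to a general optional line $\mathcal{J}$ --- which I expect to be the main obstacle --- I would approximate $\mathcal{J}$ by optional lines $\mathcal{J}_n$ taking values in a countable family of deterministic lines, exactly as for discrete stopping times, exploiting that $\mathcal{U}$ is countable and that optionality gives $\{\mathcal{J}\preceq L\}\in\mathcal{H}_L$. For a countably-valued optional line one decomposes over its possible deterministic values $L$: on each event $\{\mathcal{J}=L\}$, which by the definition of $\mathcal{H}_{\mathcal{J}}$ meets $\mathcal{H}_L$ in the required way, one inserts the indicator inside the conditional expectation and invokes the deterministic case, the indicator being measurable with respect to the conditioning field while the subtree factors remain ``fresh.'' Summing over $L$ gives the identity for $\mathcal{J}_n$, and the identity for $\mathcal{J}$ follows by letting $n\to\infty$, using $\varphi_u\in[0,1]$ together with dominated convergence (or a monotone-class reduction to continuous bounded $\varphi_u$).

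The delicate steps are therefore (i) producing the countably-valued optional approximations $\mathcal{J}_n$ and checking that the products $\prod_{u\in\mathcal{J}_n}\varphi_u\circ T^{\xi_u}$ converge to the one attached to $\mathcal{J}$, and (ii) verifying throughout that optionality places exactly the information determining $\mathcal{J}$ and the sizes $(\xi_u)_{u\in\mathcal{J}}$ into $\mathcal{H}_{\mathcal{J}}$, while leaving the internal randomness of the subtrees $(T^{\xi_u})_{u\in\mathcal{J}}$ independent of $\mathcal{H}_{\mathcal{J}}$. Granting these, the factorization is immediate from the deterministic case.
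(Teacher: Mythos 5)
Your treatment of a fixed deterministic line $L$ is essentially the paper's: the factorization follows from the i.i.d.\ structure of $(\widetilde{\xi}_u)$ and $(\mathbf{e}_u)$ and the scaling recursion of Definition~\ref{def1}, and you are right to flag the status of the root lifetime $\zeta_u=\xi_u^{-\alpha}\mathbf{e}_u$, since $\mathbf{e}_u$ for $u\in L$ is among the generators of $\mathcal{H}_L$; this bookkeeping point is real and you leave it unsettled rather than resolved. The paper disposes of the deterministic case in one sentence and then obtains the general optional case by identifying the marked tree as an instance of Jagers' general branching process (matching $\rho_u=\xi_u$, $\sigma_u=a_u$, $\tau_{ui}=\zeta_u$) and invoking Theorem~4.14 of Jagers (1989). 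So the entire content of the lemma beyond the easy case is outsourced to a cited theorem, whereas you propose to prove it directly.

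That direct route is where your proposal has a genuine gap: the discretization argument is a plan, not a proof, and the analogy with discrete stopping times breaks at exactly the points you defer. First, lines are only partially ordered, so the identity $\{T=t_k\}=\{T\le t_k\}\setminus\{T\le t_{k-1}\}$ has no automatic analogue; optionality gives you $\{\mathcal{J}\preceq L\}\in\mathcal{H}_L$, but you never show that $\{\mathcal{J}=L\}\in\mathcal{H}_L$ (equivalently that $\{L\preceq\mathcal{J}\}$ is pre-$L$ measurable), which is what your decomposition over values requires. Second, even truncated at generation $n$, the family of possible lines is uncountable when offspring numbers are unbounded (generation one alone is indexed by subsets of $\mathbb{N}$), so producing countably-valued optional approximations $\mathcal{J}_n$ is itself a construction that needs an argument (e.g.\ restriction to positive-mass nodes), not a routine step. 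Third, the convergence of $\prod_{u\in\mathcal{J}_n}\varphi_u\circ T^{\xi_u}$ to the product over $\mathcal{J}$ has no substitute here for the right-continuity used for stopping times, and dominated convergence only helps once you know the integrands converge. Each of these can presumably be repaired --- this is in substance what Jagers' and Chauvin's proofs do --- but as written the hard half of the lemma is asserted rather than established; citing Theorem~4.14 of Jagers, as the paper does, or importing the corresponding argument in full, is needed to close the proof.
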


\begin{proof}

Thanks to the i.i.d properties of the random variables
$(\widetilde{\xi}_{u}, u\in\mathcal{U})$ and $(\mathbf{e}_{u}, u
\in\mathcal{U})$, the Markov property for lines is of course easily
checked. In order to get the result for a more general optional
line,  we use  Theorem 4.14 of \cite{jag}. Indeed, the tree we have
constructed is a special case of the tree constructed by Jagers in
\cite{jag}. In our case the Jagers's notation $\rho_{u}$, $\tau_{u}$
and $\sigma_{u}$ are such that   the type $\rho_{u}$ of
$u\in\mathcal{U}$, is the mass of $u$: $\xi_{u}$, the birth time
$\sigma_{u}$ is $a_{u}$ and $\tau_{u}$ is here equal to $\zeta_{mu}$
(because the mother dies when she gives
 birth  to her daughters). We
 notice that all the sisters have the same birth time, which means
 that for all $u\in\mathcal{U}$ and all $i\in\mathbb{N}$, we have
 that $\tau_{ui}$ is here equal to $\zeta_{u}$.
\end{proof}

\section{Malthusian hypotheses and the
intrinsic martingale.}

We  introduce  some notations  to formulate the fundamental
assumptions of this work:
$$\underline{p} := \inf \left\{p\in\mathbb{R} :
\int_{\R}\pt x^{p},\mathbf{s}\gd\nu(d\mathbf{s})<\infty\right\},$$
and
$$p_{\infty} := \inf\left\{p >\underline{p}  :
\int_{\R}\pt x^{p},\mathbf{s}\gd \nu(d\mathbf{s})=\infty\right\}$$
(with the convention $\inf \emptyset = \infty$) and then for every
$p\in (\underline{p},p_{\infty})$:
$$\kappa(p):=\int_{\R}\left(1-\pt x^{p},\mathbf{s}\gd\right)
\nu(d\mathbf{s}).$$
  Note that $\kappa$ is a continuous and concave function (but not necessarily a strictly increasing function) on
  $(\underline{p},p_{\infty})$, as $p \rightarrow\int_{\R} \pt x^{p},\mathbf{s}\gd\nu(d\mathbf{s})$ is a convex application.
By concavity, the equation $\kappa (p)=0$ has at most two solutions
on $(\underline{p},p_{\infty})$. When a solution exists, we denote
by $p_{0}:=\inf\{p\in(\underline{p},p_{\infty}): \kappa(p) =0\}$ the
smallest, and call $p_{0} $ the  Malthusian exponent.

We now make the fundamental:

\medskip

\begin{mat} We suppose that the Malthusian exponent $p_{0}$ exists, that  $p_{0}>0$, and that
 \begin{equation}  \kappa(p)>0 \ \ \text{for some}\ \  p>p_{0}.\label{hypomalt}\end{equation}
 Furthermore we suppose that
the integral
\begin{equation} \int_{\R}\left( \pt
x^{p_{0}},\mathbf{s}\gd \right)^{p}\nu(d\mathbf{s})
\label{hypomalt2}\end{equation}
 is finite
for some $p > 1$.
\end{mat}

\medskip

 \textbf{Throughout the rest of this article,
  these hypotheses will
always be taken for granted.}

Note that ~(\ref{hypomalt}) always holds when $\nu (s_{i}\leq 1 \
\text{for all}\ i)=1$ (fragmentation case). We stress that $\kappa $
may not be strictly increasing, and may not be negative when $p$ is
sufficiently large (see Subsection \ref{subsect61} for a consequence
of this fact.)

We will give one example based on the  Dirichlet process (see the
book Kingman \cite{kin}).  Fix $n\geq 2$,
$(\upsilon_{1},...,\upsilon_{n})$ $n$ positive real numbers and
$\upsilon=\sum_{i=1}^{n}\upsilon_{i}$. We define the simplex
$\Delta_{n}$ by
\[\Delta_{n}:=\left\{(p_{1},p_{2},...,p_{n})\in\mathbb{R}_{+}^{n},\ \sum_{j=1}^{n}p_{i}=1\right\}.\]
The Dirichlet distribution  of parameter
$(\upsilon_{1},...,\upsilon_{n})$ over the simplex $\Delta_{n}$ has
the density (with respect to the $(n-1)$-dimensional Lebesgue
measure on $\Delta_{n}$):
\[f(p_{1},...,p_{n})=\frac{\Gamma( \upsilon)}{\Gamma (\upsilon_{1})...\Gamma (\upsilon_{n})}p_{1}^{\upsilon_{1}-1}...p_{n}^{\upsilon_{n}-1}.\]

 Let
$a:=\upsilon
(\upsilon+1)/(\sum_{i=1}^{n}\upsilon_{i}(\upsilon_{i}+1))$. Note
that $a$ is strictly larger than 1. Let the reproduction measure be
the law of $(aX_{1},...,aX_{n})$, where $(X_{1},...,X_{n})$ is a
random vector with Dirichlet distribution of parameter
$(\upsilon_{1},...,\upsilon_{n})$. Therefore
$$\kappa (p)=a^{p}\frac{\Gamma (\upsilon )}{\Gamma (\upsilon
+p)}\sum_{i=1}^{n}\frac{\Gamma (p+\upsilon_{i})}{\Gamma
(\upsilon_{i}) },$$ $\underline{p}=-\upsilon$, $p_{0}=1$ and the
Malthusian hypotheses are verified.

In this article we will call \textit{extinction} the event that for
some  $n\in\mathbb{N}$, all nodes $u$ at the $n$-th generation have
zero size, and \textit{non-extinction} the complementary event. We
see that  the probability of extinction is always strictly positive
whenever $\nu (s_{1}=0)>0$, and equals zero if and only if
$\nu(s_{1} = 0) = 0$ (as we have suppose~(\ref{hypomalt2}); see p.28
\cite{ber}).

After these definitions, we introduce a fundamental martingale
associated to $(\xi_{u}, u\in\mathcal{U})$.

\begin{theo}\label{theoremdecvmartingale} The process $$M_{n} := \sum_{|u|=n}\xi_{u}^{p_{0}},\ \ \ n\in\mathbb{N}$$
is a martingale in the filtration $(\mathcal{H}_{L_{n}})$, with
$L_{n}$ the line associated to the $n$-th generation (i.e.
$L_{n}:=\{ u \in \mathcal{U}:\ |u|=n\}$). This martingale is bounded
in $\mathrm{L}^{p}(\mathbb{P})$ for some $p
> 1$, and in particular is uniformly integrable.

Moreover, conditionally on non-extinction
 the terminal value
$M_{\infty}$ is strictly positive a.s.
\end{theo}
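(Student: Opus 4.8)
The plan is to establish the three assertions in turn: first the martingale identity, then the $L^p$-boundedness (which I expect to be the main obstacle), and finally the strict positivity of $M_\infty$ on the event of non-extinction.

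\medskip
\noindent\textbf{Martingale property.} Decomposing over the children of the generation-$n$ nodes and using $\xi_{ui}=\widetilde\xi_{ui}\xi_u$, I would write
$$M_{n+1}=\sum_{|u|=n}\xi_u^{p_0}\sum_i\widetilde\xi_{ui}^{p_0}.$$
For $|u|=n$ the weights $\xi_u^{p_0}$ are $\mathcal{H}_{L_n}$-measurable, whereas the families $\overline\xi_u=(\widetilde\xi_{ui})_i$ are independent of $\mathcal{H}_{L_n}$ and i.i.d.\ with law $\nu$. Since $\kappa(p_0)=0$ says exactly $\mathbb{E}[\sum_i\widetilde\xi_i^{p_0}]=\int_{\R}\pt x^{p_0},\mathbf{s}\gd\nu(d\mathbf{s})=1$, taking the conditional expectation yields $\mathbb{E}[M_{n+1}\mid\mathcal{H}_{L_n}]=M_n$. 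The same computation, carried out for a general exponent, gives the auxiliary identity $\mathbb{E}[\sum_{|u|=n}\xi_u^{q}]=(1-\kappa(q))^n$ for every $q\in(\underline{p},p_\infty)$, which I use below.

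\medskip
\noindent\textbf{$L^p$-boundedness (the hard part).} I would fix $p\in(1,2]$ and control the increments. Setting $W_u:=\sum_i\widetilde\xi_{ui}^{p_0}$ we have $M_{n+1}-M_n=\sum_{|u|=n}\xi_u^{p_0}(W_u-1)$, where, conditionally on $\mathcal{H}_{L_n}$, the summands are centred and independent (they depend on the disjoint offspring families $\overline\xi_u$). Applying a von Bahr–Esseen moment inequality for sums of independent centred variables (valid for $1\le p\le2$) conditionally on $\mathcal{H}_{L_n}$, then taking expectations and invoking the auxiliary identity with $q=pp_0$,
$$\mathbb{E}[|M_{n+1}-M_n|^p]\le 2\,\mathbb{E}[|W-1|^p]\,\mathbb{E}\Big[\sum_{|u|=n}\xi_u^{pp_0}\Big]=2\,\mathbb{E}[|W-1|^p]\,(1-\kappa(pp_0))^n.$$
The delicate point is choosing $p$ so that both factors behave: I need $\mathbb{E}[|W-1|^p]<\infty$, which holds for $p$ in a right-neighbourhood of $1$ by hypothesis~(\ref{hypomalt2}), and $\kappa(pp_0)>0$, so the geometric ratio is $<1$. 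Hypothesis~(\ref{hypomalt}) provides some $p'>p_0$ with $\kappa(p')>0$, and the concavity of $\kappa$ together with $\kappa(p_0)=0$ forces $\kappa>0$ on all of $(p_0,p']$; hence any $p\in\big(1,\min(2,p'/p_0)\big]$ small enough to also satisfy~(\ref{hypomalt2}) works. With such a $p$, Minkowski's inequality gives
$$\sup_n\|M_n\|_p\le\|M_0\|_p+\sum_{k\ge0}\big(2\,\mathbb{E}[|W-1|^p]\big)^{1/p}(1-\kappa(pp_0))^{k/p}<\infty,$$
the series converging because $1-\kappa(pp_0)<1$. Thus $M$ is bounded in $L^p$, hence uniformly integrable; being a nonnegative martingale it converges a.s.\ and in $L^1$ to $M_\infty$, so $\mathbb{E}[M_\infty]=\mathbb{E}[M_0]=1>0$.

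\medskip
\noindent\textbf{Strict positivity on non-extinction.} Let $q:=\mathbb{P}_1(M_\infty=0)$; since rescaling the root size by $x$ multiplies every $\xi_u$ by $x$ and hence $M_n$ by $x^{p_0}$, this quantity is the same under every $\mathbb{P}_x$. Decomposing at the first generation and using the strong branching property of Lemma~\ref{lemmearbre}, the subtrees rooted at the children are independent with the prescribed laws, so
$$M_\infty=\sum_{i:\ \xi_i>0}\xi_i^{p_0}\,M_\infty^{(i)},$$
where the $M_\infty^{(i)}$ are i.i.d.\ copies of $M_\infty$ under $\mathbb{P}_1$, independent of the first generation. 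Hence $\{M_\infty=0\}$ occurs iff $M_\infty^{(i)}=0$ for every child, and conditioning on the first generation gives $q=\int_{\R}q^{\sharp\mathbf{s}}\,\nu(d\mathbf{s})=g(q)$, where $g$ is the generating function of the offspring number $\sharp\mathbf{s}$ — exactly that of the Galton–Watson process counting the individuals of positive size per generation, whose extinction probability $q_{\mathrm{GW}}$ is the smallest root of $g(t)=t$ in $[0,1]$. By convexity of $g$ this equation has at most the two roots $q_{\mathrm{GW}}$ and $1$; since $\mathbb{E}[M_\infty]>0$ forces $q<1$, the root $q$ must equal $q_{\mathrm{GW}}$. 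As extinction of the tree clearly entails $M_\infty=0$, one has $\{\text{extinction}\}\subseteq\{M_\infty=0\}$, and the equality $\mathbb{P}(M_\infty=0)=q=q_{\mathrm{GW}}=\mathbb{P}(\text{extinction})$ shows the two events coincide up to a null set. Therefore, conditionally on non-extinction, $M_\infty>0$ almost surely.
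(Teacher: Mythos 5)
Your proof is correct, and its overall skeleton (the martingale identity from $\kappa(p_0)=0$, $L^p$-boundedness from a moment bound on $W=\langle x^{p_0},\mathbf{s}\rangle$ together with $1-\kappa(pp_0)<1$, and the fixed-point argument for positivity) matches the paper's. The one place where you genuinely diverge is the $L^p$ step: the paper views $Z^{(n)}=\sum_{|u|=n}\delta_{\log\xi_u}$ as a branching random walk and cites Biggins' convergence theorem, after verifying precisely the two conditions your computation rests on, namely $\mathbb{E}[W^{\gamma}]<\infty$ (which is (\ref{hypomalt2})) and $m(pp_0)/|m(p_0)|^{p}=1-\kappa(pp_0)<1$. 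You instead prove the needed bound from scratch by conditioning on $\mathcal{H}_{L_n}$, applying a von Bahr--Esseen inequality to the centred, conditionally independent increments $\xi_u^{p_0}(W_u-1)$, and summing a geometric series via Minkowski; this is essentially the proof of Biggins' theorem specialised to a single exponent, so you lose nothing and gain a self-contained argument. You are in fact slightly more careful than the paper on the choice of $p$: the paper attributes $\kappa(pp_0)>0$ to ``the definition of $p_0$'', whereas you correctly trace it to hypothesis (\ref{hypomalt}) combined with the concavity of $\kappa$ and $\kappa(p_0)=0$. The martingale identity and the positivity argument (the fixed-point equation $q=F(q)$ with $q<1$, identification of $q$ with the Galton--Watson extinction probability, and the inclusion $\{\text{extinction}\}\subseteq\{M_\infty=0\}$) coincide with the paper's.
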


\begin{rk}%$\bullet$
As $\kappa$ is concave the equation $\kappa (p)=0$ may have a second
root $p_{+}:=\inf\{p>p_{0},\ \kappa(p)=0\}$). This second root is
less interesting:  even though
$$M_{n}^{+} := \sum_{|u|=n}\xi_{u}^{p_{+}},\ \ \ n\in\mathbb{N},$$ is also a martingale, it is easy to check that  for all $p>1$ the $p$-variation of
$M_{n}^{+}$ is infinite, i.e. $\mathbb{E}\left(
\sum_{n=0}^{\infty}|M_{n+1}-M_{n}|^{p}\right)=\infty$).

We can notice that for all $p\in (p_{0},p_{+})$
$(M_{n}^{(p)})_{n\in\mathbb{N}}:=(\sum_{|u|=n}\xi_{u}^{p})_{n\in\mathbb{N}}$
is a supermartingale.

The assumption (\ref{hypomalt2}) means actually that
$\mathbb{E}(M_{1}^{p})<\infty$.
\end{rk}

 \begin{proof}
 $\bullet$ We
will use the fact that the empirical measure of the logarithm of
the sizes of  fragments
\begin{equation}Z^{(n)}:=\sum_{|u|=n}\delta_{\log \xi_{u}}\label{zn}\end{equation} can be
viewed as a branching random walk (see the article of Biggins
\cite{big})  and use Theorem 1 of \cite{big}. In order to do that we
first introduce some notation:  for $ \theta > \underline{p}$, we
define
 $$m(\theta ):= \mathbb{E}\left(\int e^{\theta x
}Z^{(1)}(dx)\right)=\mathbb{E}\left(\sum_{|u|=1}\xi_{u}^{\theta
}\right)=1-\kappa (\theta)$$ and $$W^{(n)} (\theta ):=m (\theta
)^{-n} \int e^{\theta x }Z^{(n)}(dx)=(1-\kappa
(\theta))^{-n}\sum_{|u|=n}\xi_{u}^{\theta }. $$ We notice that
$M_{n}=W^{(n)} (p_{0})$. Therefore in order to apply Theorem 1 of
\cite{big} and to get the convergence almost surely and in $p$th
mean for some $p>1$, it is enough to show that
$$\mathbb{E}(W^{(1)} (p_{0})^{\gamma })<\infty $$ for some $\gamma
\in(1,2]$ and
$$m(pp_{0})/ |m (p_{0})|^{p}<1$$ for some $p\in (1, \gamma ]$.
The first condition is a consequence of the Malthusian assumption.
Moreover the second follows from the identities
$$m(pp_{0})/ |m (p_{0})|^{p}=(1-\kappa(pp_{0}))/ |1-\kappa
(p_{0})|^{p}=1-\kappa(pp_{0})$$ which, by the definition of $p_{0}$,
is smaller than 1 for $p>1$ well chosen.

$\bullet$ Finally, let us now check that $M_{\infty}> 0$ $a.s.$
conditionally on non-extinction. Define $q = \mathbb{P} (M_{\infty}
= 0)$, therefore as $\mathbb{E}(M_{\infty}) = 1$ we get that $q <
1$. Moreover, an application of the branching property yields
$$\mathbb{E}(q^{Z_{n}})= q ,$$ where $Z_{n}$ is the number of
individuals with positive size at the $n$-th generation. Notice that
$Z_{n}=\pt Z^{(n)},1\gd $. By the construction of the marked tree
and as $\nu$ is a probability measure: $(Z_{n}, n\in\mathbb{N})$ is
of course a Galton-Watson process and it follows that $q$ is its
probability of extinction. Since $M_{\infty}= 0$ conditionally on
the extinction, the two events coincide a.s.
\end{proof}

\section{Evolution of the process in continuous time.}\label{section4}

After having defined the process indexed by generation and having
shown that the martingale $M_{n}$ is $\mathrm{L}^{p}(\mathbb{P})$
bounded, we are now able to define properly the main objet of this
paper. In order to do this,  when an individual labelled by $u$ has
a positive size, $\xi_{u}>0$, let $I_{u}:=[a_{u},a_{u}+\zeta_{u})$
be the interval of times during which this individual  is alive.
Otherwise, i.e. when $\xi_{u}=0$, we decide that $I_{u}=\emptyset$.
With this definition, we set:

\begin{df}\label{lmdefduprocesus} We define the process $\mathbf{X}=(\mathbf{X}(t),t\geq 0)$ by  \begin{eqnarray}\mathbf{X}(t)=\sum_{u\in\mathcal{U}}\nbOne_{\{t\in
I_{u}\}}\delta_{\xi_{u}},t\geq
0.\label{defduprocessus}\end{eqnarray} In particular we have for
$f:\Rp\rightarrow \mathbb{R}$ measurable function  $$\pt f,
\mathbf{X}(t)\gd=\sum_{u\in\mathcal{U}} f(\xi_{u})\nbOne_{\{t\in
I_{u}\}} .$$
\end{df}

For every $x>0$, let $\mathbb{P}_{x}$ be the law of the process
$\mathbf{X}$ starting from a single individual with size $x$. And
for simplification, we denote   $\mathbb{P}$ for  $\mathbb{P}_{1}$,
and let  $(\mathcal{F}_{t})_{t\geq 0}$ be the natural filtration of
the process $(\mathbf{X}(t), t \geq 0)$. We use the notation
$(X_{1}(t),..., X_{\sharp \mathbf{X}(t)}(t))$ for the sequence of
atoms of $ \mathbf{X}(t)$. In the following we will show that this
sequence is almost surely  finite. Of course the set $(X_{1}(t),...,
X_{\sharp \mathbf{X}(t)}(t))$ is the same as the set $((\xi_{u});
t\in I_{u})$; but sometimes it will be clearer to use the notation
$(X_{i}(t))$.

We define for $u\in\mathbb{R}_{+}$: $$F(u):=\int_{\R}u^{\sharp
\mathbf{s}}\nu(d \mathbf{s}).$$  We notice that $F(u)$ is the
generating function of the Galton-Watson process $(Z_{n}, n\geq
0)=(\sharp \{ u\in\mathcal{U}: \ \xi_{u}>0 \ \text{and}\ \ |u|=n\},n
\geq 0)$. %As we have suppose~(\ref{hypomalt2}), the Galton-Watson process is super critical.

\textbf{From now on, we will suppose} that for every $\epsilon >0$
\begin{eqnarray}
\int_{1-\epsilon}^{1}\frac{du}{F(u)-u}=\infty.
\label{hyposurmesurededis}\end{eqnarray} Of course if
$F^{'}(1)=\mathbb{E}(Z_{1})<\infty$ this last assumption is
fulfilled. Therefore we get the first theorem about the continous
time process:

\begin{theo}\label{theodef} The
process $\mathbf{X}$ takes its values in the set $\R$. It is a
branching Markov chain, more precisely the conditional distribution
of $\mathbf{X}(t+r)$ given that $\mathbf{X}(r)=\mathbf{s}$ is the
same as that of the sum $\sum \mathbf{X}^{(i)}(t)$,  where for each
index $i$, $\mathbf{X}^{(i)}(t)$ is distributed as $\mathbf{X}(t)$
under $\mathbb{P}_{s_{i}}$ and the variables $\mathbf{X}^{(i)}(t)$
are independent.

The process $\mathbf{X}$ also has the scaling property, namely for
every $c>0$, the distribution of the rescaled process
$(c\mathbf{X}(c^{\alpha}t), t\geq 0)$ under $\mathbb{P}_{1}$  is
$\mathbb{P}_{c}$.

\end{theo}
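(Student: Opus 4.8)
The plan is to establish the three assertions in turn, treating the scaling property and the branching Markov property as fairly direct consequences of the explicit construction of Definition \ref{lmdefduprocesus} together with Lemma \ref{lemmearbre}, and reserving the finiteness of $\sharp\mathbf{X}(t)$ for last, as it is the genuinely delicate point. For the scaling property I would argue straight from the recursive definition of the marks. Writing $(\xi_u^{(1)}, a_u^{(1)}, \zeta_u^{(1)})$ for the marks built under $\mathbb{P}_1$ from fixed families $(\overline{\xi}_u)$ and $(\mathbf{e}_u)$, one checks by induction on $|u|$ that the same families produce, under $\mathbb{P}_c$, the marks $\xi_u = c\,\xi_u^{(1)}$, $\zeta_u = c^{-\alpha}\zeta_u^{(1)}$ and hence $a_u = c^{-\alpha}a_u^{(1)}$: indeed $\xi_{ui}=\widetilde\xi_{ui}\xi_u$ is linear in the root size, $\zeta_u=\xi_u^{-\alpha}\mathbf{e}_u$ then carries the factor $c^{-\alpha}$, and birth-times are sums of ancestral lifetimes. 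Consequently $I_u=c^{-\alpha}I_u^{(1)}$, so that $\nbOne_{\{t\in I_u\}}=\nbOne_{\{c^\alpha t\in I_u^{(1)}\}}$, and substituting into (\ref{defduprocessus}) identifies the law of $\mathbf{X}$ under $\mathbb{P}_c$ with that of $(c\mathbf{X}(c^\alpha t))_{t\geq 0}$ under $\mathbb{P}_1$.

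For the branching Markov property I would apply the strong Markov branching property of Lemma \ref{lemmearbre} to the random set $\mathcal{J}_r:=\{u\in\mathcal{U}:\ r\in I_u\}$ of individuals alive at the fixed time $r$. This set is a line, and I would first verify it is optional in the sense of Section \ref{sectionarbredegeneration}, so that Lemma \ref{lemmearbre} yields the conditional independence of the extracted marked trees $T^{\xi_u}$, $u\in\mathcal{J}_r$, each distributed as a fresh tree under $\mathbb{P}_{\xi_u}$. The one point requiring care is that cutting at a deterministic time $r$ leaves each alive individual with a residual lifetime rather than a full one; here the lack-of-memory property of the exponential variables $\mathbf{e}_u$ guarantees that, conditionally on $\mathcal{F}_r$ and on $\xi_u=s_i$, this residual lifetime is again exponential with parameter $\xi_u^\alpha$ and independent of the past, so that the post-$r$ evolution issued from $u$ is genuinely a copy of $\mathbf{X}$ under $\mathbb{P}_{s_i}$. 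Translating the resulting identity back through Definition \ref{lmdefduprocesus} gives the stated decomposition of $\mathbf{X}(t+r)$.

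The main obstacle is the first claim, that $\sharp\mathbf{X}(t)<\infty$ a.s., equivalently that the birth-times $(a_u)$ do not accumulate before any finite time (non-explosion). The counting process $N(t):=\sharp\mathbf{X}(t)$ depends on the sizes only through the branching rates: an individual of size $\xi$ waits an $\mathrm{Exp}(\xi^\alpha)$ time and is then replaced by $\sharp\mathbf{s}$ children, $\mathbf{s}\sim\nu$, whose number has generating function $F$. When $\alpha=0$ all rates equal $1$ and $N$ is exactly a continuous-time Markov branching process with offspring generating function $F$, for which the classical non-explosion criterion is precisely the divergence imposed in (\ref{hyposurmesurededis}); this is the reason that hypothesis was required. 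The genuine difficulty is the regime $\alpha>0$ combined with (\ref{fargmentplusgrand}), where a daughter may exceed her mother: the sizes along a line of descent may then grow, the rates $\xi_u^\alpha$ may increase, and a priori this could precipitate infinitely many births in finite time. I would control this by coupling the non-explosion criterion with the Malthusian moment bound of Theorem \ref{theoremdecvmartingale}, using the uniform $\mathrm{L}^p$-boundedness of $M_n=\sum_{|u|=n}\xi_u^{p_0}$ to keep the sizes, and hence the aggregate reproduction rate, from exploding across the population, so that only finitely many individuals are born before any fixed $t$. Reconciling these size-dependent rates with the homogeneous criterion (\ref{hyposurmesurededis}) is the step I expect to be the crux of the whole theorem.
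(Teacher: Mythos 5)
Your proposal follows the paper's own proof essentially step for step: the scaling property from the recursive definition of the marks, the branching Markov property from Lemma \ref{lemmearbre} applied to the optional line $\{u:\ r\in I_u\}$ of individuals alive at time $r$ together with the lack of memory of the exponentials, and the finiteness of $\sharp\mathbf{X}(t)$ by combining the $\mathrm{L}^p$-bound of Theorem \ref{theoremdecvmartingale} with the non-explosion criterion (\ref{hyposurmesurededis}). The one step you flag as unresolved is carried out in the paper exactly as you anticipate: Doob's inequality gives $\sup_{u\in\mathcal{U}}\xi_u<\infty$ a.s., and conditionally on the event $\{\sup_{u\in\mathcal{U}}\xi_u\le m\}$ every lifetime stochastically dominates an exponential variable of parameter $m^{\alpha}$ (since $\alpha\ge 0$), so the population is dominated by a homogeneous continuous-time Markov branching process with offspring law $\sharp\mathbf{s}\vee 1$ under $\nu$, to which the Harris/Athreya--Ney non-explosion criterion applies under (\ref{hyposurmesurededis}); letting $m\to\infty$ concludes.
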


In the fragmentation case, the fact that the size of the fragments
decreases with time entails that the process of the fragments of
size larger  than or equal to $\epsilon$ is Markovian, and which
leads easily to Theorem \ref{theodef}. This property is lost in the
present case.

\begin{proof}

$\bullet$ First we will check that for all $t\geq 0$,
$\mathbf{X}(t)$ is a (random) finite point measure.
 By Theorem~\ref{theoremdecvmartingale} and
the Doob's $ \mathrm{L}^{p}$-inequality we get that for some
$p>1$:
$$\sup_{n\in\mathbb{N}}M_{n}=\sup_{n\in\mathbb{N}}\sum_{|u|=n}\xi_{u}^{p_{0}}\in
\mathrm{L}^{p}( \mathbb{P}).$$ As a consequence:
$$\sup_{u\in\mathcal{U}}\xi_{u}^{p_{0}}\in
\mathrm{L}^{p}( \mathbb{P})$$ and then by the definition of the
process $\mathbf{X}$, writing $X_{1}(t),...$ for the (possibly
infinite) sequence of atoms of $\mathbf{X}(t)$
$$\sup_{i}\sup_{t\in\mathbb{R}_{+}}X_{i}(t)^{p_{0}}\in
\mathrm{L}^{p}( \mathbb{P}).$$ Recall that $p_{0}>0$ by assumption.
We fix some arbitrarily large  $m>0$. We now work conditionally on
the event that the size of all individuals is bounded by $m$, and we
will  show that the number of the individuals alive at time $t$ is
almost surely finite for all $t\geq 0$.

As we are conditioning on the event
$\{\sup_{u\in\mathcal{U}}\xi_{u}\leq m\}$,  by the construction of
the marked tree, we get that the life time of an individual  can
be stochastically  bounded from below by an exponential variable
of parameter $m^{\alpha}$. Therefore we can bound the number of
individuals present at time $t$ by the number of individuals of a
continuous time branching process denoted by $GW$ in which  each
individual lives for a random time  whose law is  exponential of
parameter
 $m^{\alpha}$  and the probability distribution of the offspring is the law of $ \sharp s\vee 1$ under $\nu$ (we have taken the supremum
  with 1 to ensure the absence of death).
For the Markov branching process $GW$, we are in the temporally
homogeneous case and, we notice that  $$\int_{\R}u^{(n_{
\mathbf{s}})\vee 1}\nu(d\mathbf{s})=(f(u)-u)\nu (n_{ \mathbf{s}}\neq
0)+u,$$ therefore as we have supposed (\ref{hyposurmesurededis}), we
can use Theorem 1 p.105 of the book of Athreya and Ney \cite{athney}
(proved in Theorem 9 p.107 of the book of Harris \cite{har}) and get
that we are in the non-explosive case for the $GW$. As the number of
the individuals is bounded by that of  $GW$ we get that the number
of individuals at time $t$ is a.s. finite.

Therefore  conditioning on the event
$\{\sup_{u\in\mathcal{U}}\xi_{u}\leq m\}$, we have that for all
$t\geq 0$, the number of individuals at time $t$ is a.s. finite,
i.e. $\mathbf{X}(t)$ is a finite point measure.

$\bullet$ Second we will show the Markov property. Fix
$r\in\mathbb{R}_{+}$. Let $\tau_{r}$ be equal to $\{
u\in\mathcal{U}:\ \ r\in I_{u}\}$. We notice that $\tau_{r}$ is an
optional line. In fact for all lines $L\subset \mathcal{U}$ we have
that  $$\{ \tau_{r}\preceq  L\} =\{  r <a_{u} +\zeta_{u} \ \ \forall
u\in L\}\in \mathcal{H}_{L}.$$ By definition, we have the identity
$$\sum_{i=1}^{\sharp\mathbf{X}(t+r)} \nbOne_{\{X_{j}(t+r)>0\}}
\delta_{X_{j}(t+r)}=\sum_{u\in\mathcal{U}}\nbOne_{\{t+r\in
I_{u}\}}\delta_{\xi_{u}}.$$ Let
$\mathbf{X}(r)=\sum_{i=1}^{n}\delta_{\xi_{v_{n}}}\in\R$ with
$n=\sharp\mathbf{X}(r)$ and  $(v_{1},...,v_{n})$ the nodes of
$\mathcal{U}$.
 Define for all $i\leq n$,
  $$\tilde{T}^{(i)}:=((\xi_{v_{i}u}, a_{v_{i}u}-a_{v_{i}}, \zeta_{v_{i}u}-\nbOne_{\{u=\emptyset\}}(r-a_{v_{i}}))_{u\in\mathcal{U}})
 =((\tilde{\xi}_{u}^{(i)}, \tilde{a}_{u}^{(i)}, \tilde{\zeta}_{u}^{(i)})_{u\in\mathcal{U}}), $$
$\tilde{I}_{u}^{(i)}:=[\tilde{a}_{u}^{(i)},
\tilde{a}_{u}^{(i)}+\tilde{\zeta}_{u}^{(i)}[$ and
$$\mathbf{X}^{(i)}(t)=\sum_{u\in\mathcal{U}}\nbOne_{\{t\in
\tilde{I}_{u}^{(i)}\}}\delta_{\tilde{\xi}_{u}^{(i)}}.$$  Then
$$\mathbf{X}(t+r)=\sum_{i=1}^{n}\mathbf{X}^{(i)}(t).$$

By the lack of memory of the exponential variable, we have that for
$u\in\mathcal{U}$, given $s\in I_{u}$ the law of the marked tree
$\tilde{T}^{(i)}$  is the same as  that of
$$T^{\xi_{v_{i}}}:=((\xi_{v_{i}u},
a_{v_{i}u}-a_{v_{i}},
\zeta_{v_{i}u})_{u\in\mathcal{U}}):=((\xi_{u}^{i}, a_{u}^{i},
\zeta_{u}^{i})_{u\in\mathcal{U}}) .$$
 Thus we have the equality in law: $$\sum_{u\in\mathcal{U} }\nbOne_{\{t\in
\tilde{I}_{u}^{(i)}\}}\delta_{\tilde{\xi}_{u}^{(i)}}\overset{(d)}{=}\sum_{u\in
\mathcal{U}}\nbOne_{\{t\in I_{u}^{i}\}}\delta_{\xi_{u}^{i}},$$ with
$I_{u}^{i}:=[a_{u}^{i},a_{u}^{i}+\zeta_{u}^{i}[$.

Let $\tau_{r}^{i}:=\{v_{i}u\in\mathcal{U}:\ \ r\in I_{u}^{i}\}$.
Moreover for all lines $L\in \mathcal{U}$ we have that  $$\{
\tau_{r}^{i}\preceq  L\} =\{ r<a_{v_{i}u} +\zeta_{v_{i}u} \ \
\forall v_{i}u\in L\}\in \mathcal{H}_{L}.$$ Therefore $\tau_{r}^{i}$
is an optional line and by applying Lemma~\ref{lemmearbre}  for the
optional  line $\tau_{s}^{i}$,  we have that the condition
distribution of the point measure
$$\sum_{u \in \mathcal{U}}\nbOne_{\{t+r\in
I_{u}^{i}\}}\delta_{\xi_{u}^{i}}$$ given $\mathcal{H}_{\tau_{r}}$ is
the law of
 $\mathbf{X}(t)$ under $\mathbb{P}_{x_{i}}$. We notice that $
 \mathcal{H}_{\tau_{s}}=\sigma (\tilde\xi_{u}, \mathbf{e}_{u}:
 a_{u}\leq s)$ is the same filtration as $ \mathcal{F}_{s}=\sigma
 (\mathbf{X}(s^{'}):\ s^{'} \leq s)$.
Therefore $(\mathbf{X}^{(1)}, \mathbf{X}^{(2)},...,
\mathbf{X}^{(n)})$ is a sequence of independent random processes,
where for each $i$ $\mathbf{X}^{(i)}(t)$ is distributed as
$\mathbf{X}(t)$ under $\mathbb{P}_{x_{i}}$. We then have proven the
Markovian property.

$\bullet$  The scaling property is an easy consequence of the
definition of the tree $T_{x}$.
\end{proof}

\begin{rk}For every measurable function $g : \mathbb{R}_{+}^{*}\rightarrow
\mathbb{R}_{+}^{*}$,  define a multiplicative functional  such that
for every $\mathbf{s}=\sum_{i=1}^{\sharp s} \delta_{s_{i}}\in\R $:
$$\phi_{g}(\mathbf{s}):=\exp  (- \pt g,\mathbf{s}\gd) = \exp  (-\sum_{i=1}^{\sharp s} g(s_{i}))       .$$ Then the generator $G$ of the Markov process $\mathbf{X}(t)$  fulfills for every  $\mathbf{y} =\sum_{i=1}^{\sharp\mathbf{y}} \delta_{y_{i}}\in\R$:
\begin{eqnarray}G\phi_{g}(\mathbf{y}) =
\sum y_{i}^{\alpha}e^{ -\sum_{j\neq i} g(y_{j})}\int_{ \R}(e^{ -\pt
g(xy_{i}),\mathbf{s}\gd }-e^{ -g(y_{i})})\nu(d\mathbf{s})
.\label{generator}\end{eqnarray}
\end{rk}

The intrinsic martingale $M_{n}$ is indexed by the generations; it
will  also be convenient to consider its analogue in continuous
time, i.e $$M(t):= \pt
x^{p_{0}},\mathbf{X}(t)\gd=\sum_{u\in\mathcal{U}}\nbOne_{\{t\in
I_{u}\}}\xi_{u}^{p_{0}} .$$
 It is
straightforward to check that $(M(t), t \geq 0)$ is again a
martingale in the natural filtration $(\mathcal{F}_{t})_{t\geq 0}$
of the process $(\mathbf{X}(t), t \geq 0)$; and more precisely, the
argument Proposition 1.5 in \cite{ber} gives:

\begin{cor}\label{convergencemartingale} The process $(M(t),t\geq 0)$ is a martingale, and more precisely  $$M(t) = \mathbb{E}(M_{\infty}| \mathcal{F}_{t}),$$  where $M_{\infty}$ is the terminal
value of the intrinsic martingale $(M_{n}, n \in \mathbb{N})$. In particular $M(t)$ converges in
$\mathrm{L}^{p}(\mathbb{P})$ to $M_{\infty}$ for some $p > 1$.
\end{cor}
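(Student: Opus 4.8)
The plan is to recognize $M(t)$ as the intrinsic martingale $(M_n)$ sampled along the optional line $\tau_t := \{u \in \mathcal{U} : t \in I_u\}$ of individuals alive at time $t$, and to prove the optional-sampling identity $\mathbb{E}(M_\infty \mid \mathcal{F}_t) = M(t)$ directly; the martingale property and the $\mathrm{L}^p$-convergence will then both be immediate consequences. I would rely on three facts already recorded in the proof of Theorem~\ref{theodef}: that $\tau_t$ is an optional line, that $M(t) = \sum_{u \in \tau_t} \xi_u^{p_0}$, and that $\mathcal{H}_{\tau_t} = \mathcal{F}_t$. These let me work with $\mathcal{H}_{\tau_t}$ and Lemma~\ref{lemmearbre} in place of $\mathcal{F}_t$.

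The core of the argument is a decomposition of the terminal value $M_\infty$ along $\tau_t$. I would fix $t$ and, for each generation $n$, split the sum $\sum_{|w|=n,\, \xi_w > 0} \xi_w^{p_0}$ according to the unique ancestor of $w$ lying in $\tau_t$. That every surviving node $w$ at a large enough generation stems from exactly one element of $\tau_t$ is where the non-explosiveness established in Theorem~\ref{theodef} enters: along any ancestral ray the alive-intervals $I_{w^{(k)}}$ tile $[0,\cdot)$ without accumulation, so each ray crosses the level $t$ at a single node. Writing $M_\infty^{(u)} := \lim_n \sum_{|w|=n,\, u \preceq w} (\xi_w/\xi_u)^{p_0}$ for the normalized intrinsic limit of the subtree rooted at $u$, this grouping, together with the almost sure finiteness of $\tau_t$ (again by Theorem~\ref{theodef}) which permits interchanging the limit with the outer sum, yields
\[ M_\infty = \sum_{u \in \tau_t} \xi_u^{p_0}\, M_\infty^{(u)}. \]

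It then remains to take $\mathbb{E}(\,\cdot \mid \mathcal{H}_{\tau_t})$. Each $\xi_u$ with $u \in \tau_t$ is $\mathcal{H}_{\tau_t}$-measurable, and by the strong branching Markov property (Lemma~\ref{lemmearbre}) applied to $\tau_t$ the rescaled subtrees $(T^{\xi_u})_{u \in \tau_t}$ are, conditionally on $\mathcal{H}_{\tau_t}$, independent with $T^{\xi_u}$ distributed according to $\mathbb{P}_{\xi_u}$; hence each $M_\infty^{(u)}$ has, conditionally, the law of $M_\infty$ under $\mathbb{P}_1$, so that $\mathbb{E}(M_\infty^{(u)} \mid \mathcal{H}_{\tau_t}) = \mathbb{E}(M_\infty) = 1$ (the last equality because $(M_n)$ is uniformly integrable with $M_0 = 1$, by Theorem~\ref{theoremdecvmartingale}). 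As all terms are nonnegative and $\tau_t$ is finite, I may pass the conditional expectation through the sum to get $\mathbb{E}(M_\infty \mid \mathcal{F}_t) = \sum_{u \in \tau_t} \xi_u^{p_0} = M(t)$. Finally $M(t) = \mathbb{E}(M_\infty \mid \mathcal{F}_t)$ with $M_\infty \in \mathrm{L}^p$ exhibits $(M(t))$ as a closed (Doob) martingale, and since $\mathcal{F}_t \uparrow \mathcal{F}_\infty$ with $M_\infty$ being $\mathcal{F}_\infty$-measurable, the $\mathrm{L}^p$ martingale convergence theorem gives $M(t) \to M_\infty$ in $\mathrm{L}^p$.

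I expect the main obstacle to be the decomposition step: justifying that $\tau_t$ cuts every surviving ancestral ray exactly once (so no mass of $M_\infty$ is lost or double-counted), that the normalization making $\mathbb{E}(M_\infty^{(u)}\mid\mathcal{H}_{\tau_t})=1$ is the correct one, and that interchanging the $n\to\infty$ limit with the finite sum over $\tau_t$ and with the conditional expectation is legitimate. The non-explosion and finiteness statements from Theorem~\ref{theodef}, together with the uniform integrability from Theorem~\ref{theoremdecvmartingale}, are precisely the inputs that make these steps rigorous.
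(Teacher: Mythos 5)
Your route is in substance the paper's own: the decomposition you propose is, generation by generation, exactly
\[
M_n=\sum_{u\in\tau_t,\;|u|\le n}\ \xi_u^{p_0}\sum_{|w|=n,\;u\preceq w}(\xi_w/\xi_u)^{p_0}\;+\;\sum_{|w|=n}\xi_w^{p_0}\,\nbOne_{\{a_w+\zeta_w< t\}},
\]
whose conditional-expectation form is the paper's identity (\ref{equationdemartingale}); your use of Lemma~\ref{lemmearbre} along the optional line $\tau_t$ to get $\mathbb{E}(M_\infty^{(u)}\mid\mathcal{H}_{\tau_t})=1$ plays the role of the paper's application of the Markov property at time $t$, and the closing Doob/$\mathrm{L}^p$ step is identical.

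The gap is in the one place where the paper actually has to work: proving that the second term above --- the $p_0$-mass of generation-$n$ individuals already dead at time $t$ --- vanishes as $n\to\infty$, which is exactly the statement that no mass of $M_\infty$ escapes your decomposition. Your justification (each ancestral ray crosses the level $t$ at exactly one node, plus a.s. finiteness of $\tau_t$) is a per-ray statement: it shows that for each fixed ray the generation-$n$ node eventually stems from $\tau_t$, but it does not control the \emph{sum} over all generation-$n$ nodes, and finiteness of the set of individuals \emph{alive} at time $t$ is not the same as finiteness of the set of individuals \emph{dead by} time $t$, which is what you would need to conclude that the dead set at generation $n$ is eventually empty. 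The paper closes this by conditioning on $B_m=\{\sup_{u\in\mathcal{U}}\xi_u\le m\}$: since $\alpha\ge 0$, on $B_m$ the death time $a_w+\zeta_w$ of a generation-$n$ node is stochastically bounded below by a sum of $n+1$ i.i.d. exponentials of parameter $m^\alpha$ independent of $\xi_w$, so the expected dead mass at generation $n$ is at most the probability that such a sum is below $t$, which tends to $0$; one then lets $m\to\infty$ using $\sup_u\xi_u<\infty$ a.s. To repair your version, either import that estimate, or strengthen the non-explosion input to ``a.s. finitely many individuals are born before time $t$'' (which the comparison with the non-explosive Galton--Watson process in the proof of Theorem~\ref{theodef} does provide on $B_m$). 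As written, the identity $M_\infty=\sum_{u\in\tau_t}\xi_u^{p_0}M_\infty^{(u)}$ is asserted rather than proved.
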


\begin{proof}

 We will use the same argument as in the proof of Proposition 1.5 of \cite{ber}. Netherless, we have to deal here with the fact
  that  $\sup_{u\in\mathcal{ U}}\xi_{u}$ may be larger than 1.
 Therefore we will have to condition. We know that $M_{n}$ converges in $\mathrm{L}^{p}(\mathbb{P})$ to $M_{\infty}$ as $n$ tends to $\infty$, so $$\mathbb{E}(M_{\infty}| \mathcal{F}_{t}) =\lim_{n\rightarrow\infty}  \mathbb{E}(M_{n} |\mathcal{F}_{t}).$$ By Theorem~\ref{theoremdecvmartingale} as we have
$$\sup_{u\in\mathcal{U}}\xi_{u}^{p_{0}}\in\mathrm{L}^{p}(
\mathbb{P}),$$ we fix $m>0$. We now work  on the event
$B_{m}:=\{\sup_{u\in\mathcal{U}}\xi_{u}\leq m\}$.

By applying  the Markov property at time $t$ we easily get that
\begin{eqnarray}\mathbb{E}(M_{n} |\mathcal{F}_{t}) = \sum_{i=1}^{\sharp \mathbf{X}(t)} X_{i}^{p_{0}}(t)\nbOne_{\{\varrho (X_{i}(t))\leq n\}}+\sum_{|u|=n}\xi_{u}^{p_{0}}\nbOne_{\{a_{u}+\zeta_{u}<t\}}\label{equationdemartingale}\end{eqnarray}
 where $\varrho(\xi_{v})$ stands for the generation of the individual $v$ (i.e. $\varrho (\xi_{v}) = |v|$), and $a_{u} + \zeta_{u}$ is the instant when the individual corresponding to the node $u$ reproduces. We can rewrite the latter as
 $$a_{u}+\zeta_{u}=\xi_{m^{|u|}u}^{-\alpha}\mathbf{e}_{0}+\xi_{m^{|u|-1}u}^{-\alpha}\mathbf{e}_{1}+...+\xi_{u}^{-\alpha}\mathbf{e}_{|u|}$$ where $\mathbf{e}_{0}$,... is a sequence of independent exponential variables with parameter 1, which is also independent of $\xi_{u}$.
 We can remark that in the first term of  sum~(\ref{equationdemartingale}) we sum over the sizes of the individuals  which belong to the $n$-th generation and  are alive at time $t$, and in the second term we sum over those belonging to the $n$-th generation and  are dead at time $t$.

As $\alpha$ is nonnegative,  and as we are working on the event
$B_{m}$: $\xi_{m^{i}u}^{-\alpha}\geq m^{-\alpha}$ we have that for
each fixed node $u \in\mathcal{U}$ , $a_{u} + \zeta_{u}$ is bounded
from below by the sum of $|u|+1$ independent exponential variables
with parameter $m^{\alpha}$ which are independent of $\xi_{u}$. Thus
$$\lim_{n\rightarrow\infty}\mathbb{E}\left(\sum_{|u|=n}\xi_{u}^{p_{0}}\nbOne_{\{a_{u}+\zeta_{u}<t\}}\nbOne_{\{B_{m}\}}\right)
= 0 ,$$ and therefore  by (\ref{equationdemartingale})  on the event
$\{ B_{m}\}$, we get that for all $m> 0$: $\mathbb{E}(M_{\infty}|
\mathcal{F}_{t})\nbOne_{\{B_{m}\}} =M(t)\nbOne_{\{B_{m}\}}$, and
then by letting  $m$ tend to $\infty$ we get the result.
\end{proof}

\section{A randomly tagged leaf.}
We will here (as in \cite{ber}) define what a  tagged
individual is by using a tagged leaf.

We call \textit{leaf} of the tree $\mathcal{U}$ an infinite sequence
of integers $l = (u_{1}, . . .)$. For each $n$, $l^{n} := (u_{1}, .
. . , u_{n})$ is the  ancestor of $ l$  at the generation $n$. We
enrich the probabilistic structure by adding the information about a
so called tagged leaf, chosen at random as follows. Let
$\mathrm{H}_{n}$ be the space of bounded functionals $\Phi$ which
depend on the mark $M$ and  of the leaf $l$ up to the $n$-th first
generation, i.e. such that $\Phi(M, l) = \Phi(M^{'}, l^{'})$ if
$l^{n} = l^{n'}$ and $M(u) = M^{'}(u)$ whenever $|u| \leq n$. For
such functionals, we use the slightly abusing notation $\Phi(M, l) =
\Phi(M, l^{n})$. As in \cite{ber} for a pair $(M,\lambda )$ where $M
: \mathcal{U} \rightarrow [0, 1] \times \Rp\times \Rp$ is a random
mark on the tree and $\lambda$ is a random leaf of $\mathcal{U}$,
the joint distribution denoted by $ \mathbb{P}^{*}$ (and by $
\mathbb{P}_{x}^{*}$ if the size of the first mark is $x$ instead of
1) can be defined unambiguously   by
$$ \mathbb{E}^{*}(\Phi(M,\lambda))= \mathbb{E}\left(\sum_{|u|=n}\Phi(M,u)\xi_{u}^{p_{0}}\right),\ \ \ \ \Phi\in \mathrm{H}_{n}.$$
Moreover since the intrinsic martingale $(M_{n}, n \in
\mathbb{Z}_{+})$ is uniformly integrable (cf.
Theorem~\ref{theoremdecvmartingale}), the first marginal of $
\mathbb{P}^{*}$ is absolutely continuous with respect to the law of
the random mark $M$ under $ \mathbb{P}$, with density $M_{\infty}$.

Let $\lambda_{n}$ be  the node of the tagged leaf at the $n$-th
generation. We denote $\chi_{n}: =\xi_{\lambda_{n}}$ for the size of
the individual corresponding to the node $\lambda_{n}$  and $\chi(t)$
for the size of the tagged individual alive at time $t$, viz.
$$\chi(t):=\chi_{n}\ \ \ \ \hbox{if} \ \ a_{\lambda_{n}}\leq
t<a_{\lambda_{n}}+\zeta_{\lambda_{n}},$$ because in the case
considered $\sup_{n\in \mathbb{N}}a_{\lambda_{n}}=\infty$. We stress
that, in general the process $\chi (t)$ is not monotonic. However as
in \cite{ber}, Lemma 1.4 there
 becomes:

\begin{lm}\label{lemmeequaliteloikappa}
Let $k: \mathbb{R}_{+} \rightarrow \mathbb{R}_{+}$ be a measurable
function such that $k(0) = 0$. Then we have for every $n
\in\mathbb{N}$  $$ \mathbb{E}^{*}(k(\chi_{n})) =
\mathbb{E}\left(\sum_{|u|=n}\xi_{u}^{p_{0}}k(\xi_{u})\right) ,$$ and
for every $t \geq 0$
$$\mathbb{E}^{*}(k(\chi(t))) =
\mathbb{E}\left(\pt x^{p_{0}}k(x), X(t)\gd\right)
.$$
\end{lm}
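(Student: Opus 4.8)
The plan is to treat the two identities in turn: the generation-indexed identity will follow directly from the definition of $\mathbb{P}^{*}$, and the continuous-time identity will be reduced to a countable sum of statements of the first type.

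First I would reduce to the case of bounded $k$. Since $k\geq 0$, I approximate it by $k\wedge N$, establish both identities for each bounded $k\wedge N$, and pass to the limit $N\to\infty$ by monotone convergence on each side; as all the quantities involved are nonnegative, the resulting identities hold in $[0,\infty]$. For a bounded $k$ the first identity is then immediate: the functional $\Phi(M,u):=k(\xi_{u})$ depends only on the mark of the node $u$ at generation $n$, hence lies in $\mathrm{H}_{n}$, and by the definition of the tagged leaf $\Phi(M,\lambda)=k(\xi_{\lambda_{n}})=k(\chi_{n})$. Substituting this $\Phi$ into the defining relation of $\mathbb{P}^{*}$ yields
$$\mathbb{E}^{*}(k(\chi_{n}))=\mathbb{E}\Big(\sum_{|u|=n}k(\xi_{u})\xi_{u}^{p_{0}}\Big),$$
which is the first claim. (The terms with $\xi_{u}=0$ drop out because $p_{0}>0$, consistently with the hypothesis $k(0)=0$.)

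For the continuous-time identity, the key observation is that along the tagged leaf the successive life intervals tile the half-line. Since $a_{\lambda_{n+1}}=a_{\lambda_{n}}+\zeta_{\lambda_{n}}$ and $\sup_{n}a_{\lambda_{n}}=\infty$, the intervals $I_{\lambda_{n}}=[a_{\lambda_{n}},a_{\lambda_{n}}+\zeta_{\lambda_{n}})$ are pairwise disjoint and cover $\Rp$; hence at each time $t$ exactly one ancestor of the tagged leaf is alive, and
$$k(\chi(t))=\sum_{n=0}^{\infty}k(\chi_{n})\,\nbOne_{\{t\in I_{\lambda_{n}}\}}.$$
I would then apply the first part to each summand. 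The functional $\Phi(M,l):=k(\xi_{l^{n}})\,\nbOne_{\{t\in I_{l^{n}}\}}$ belongs to $\mathrm{H}_{n}$, because the indicator depends only on $a_{\lambda_{n}}$ and $\zeta_{\lambda_{n}}$, which are components of the mark $M(\lambda_{n})$ at the node of generation $n$. The defining relation of $\mathbb{P}^{*}$ therefore gives
$$\mathbb{E}^{*}\big(k(\chi_{n})\nbOne_{\{t\in I_{\lambda_{n}}\}}\big)=\mathbb{E}\Big(\sum_{|u|=n}\xi_{u}^{p_{0}}k(\xi_{u})\nbOne_{\{t\in I_{u}\}}\Big).$$
Summing over $n$ and interchanging the (nonnegative) sum with the expectation by Tonelli, the right-hand side becomes $\mathbb{E}\big(\sum_{u\in\mathcal{U}}\xi_{u}^{p_{0}}k(\xi_{u})\nbOne_{\{t\in I_{u}\}}\big)=\mathbb{E}(\pt x^{p_{0}}k(x),X(t)\gd)$, which is the second claim.

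The truncation and the Tonelli interchange are routine; the step requiring the most care is the decomposition of $k(\chi(t))$, namely verifying that the intervals $I_{\lambda_{n}}$ genuinely partition $\Rp$ and that each indicator $\nbOne_{\{t\in I_{\lambda_{n}}\}}$ is $\mathrm{H}_{n}$-measurable. Both rest on the recursive relation $a_{ui}=a_{u}+\zeta_{u}$ and on the fact, already used in the construction of $\mathbf{X}$, that $\sup_{n}a_{\lambda_{n}}=\infty$.
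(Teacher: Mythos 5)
Your proof is correct and is essentially the argument the paper intends: the lemma is stated without proof as the transcription of Lemma 1.4 of Bertoin's book, whose proof is exactly your two steps --- plug $\Phi(M,l)=k(\xi_{l^{n}})$, respectively $\Phi(M,l)=k(\xi_{l^{n}})\nbOne_{\{t\in I_{l^{n}}\}}$, into the defining relation of $\mathbb{P}^{*}$, then sum over $n$ by Tonelli. The one point that needs care in this non-monotone setting, namely that the intervals $I_{\lambda_{n}}$ tile $\Rp$ under $\mathbb{P}^{*}$ because $a_{\lambda_{n+1}}=a_{\lambda_{n}}+\zeta_{\lambda_{n}}$ and $\sup_{n}a_{\lambda_{n}}=\infty$, is exactly the one you flag, and it is asserted in the paper where $\chi(t)$ is defined.
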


Proposition 1.6  of \cite{ber} becomes:

\begin{prop}\label{propSn}
Under $ \mathbb{P}^{*}$, $$S_{n} := \ln \chi_{n} ,\ \ \  n \in
\mathbb{Z}_{+}$$is a random walk on $\mathbb{R}$ with step distribution
$$ \mathbb{P} (\ln \chi_{n} - \ln \chi_{n+1}\in  dy) =
\widetilde{\nu}(dy) ,$$
where the probability measure $\widetilde{\nu}$ is defined by
$$\int_{]0,\infty[}k(y) \widetilde{\nu}(dy)=\int_{ \R}\pt x^{p_{0}}k(\ln(x)), \mathbf{s}\gd\nu(d\mathbf{s}) .$$
Equivalently, the Laplace transform of the step distribution is
given by $$ \mathbb{E}^{*}(e^{pS_{1}})=
\mathbb{E}^{*}(\chi_{1}^{p})=1-\kappa(p+p_{0}),\ \ \ p\geq 0.$$ Moreover, conditionally
on $(\chi_{n}, n \in \mathbb{Z}_{+})$ the sequence of the
lifetimes $(\zeta_{\lambda_{0}},\zeta_{\lambda_{1}},...)$ along the
tagged leaf is a sequence of independent exponential variables
with respective parameters $\chi_{0}^{\alpha},\chi_{1}^{\alpha},...$
\end{prop}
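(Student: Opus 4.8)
The plan is to compute the joint law of the tagged leaf's marks directly from the definition of $\mathbb{P}^*$ via the formula $\mathbb{E}^*(\Phi(M,\lambda))=\mathbb{E}(\sum_{|u|=n}\Phi(M,u)\xi_u^{p_0})$, and to read off each assertion of the proposition from this identity. The three claims to establish are: (i) $S_n=\ln\chi_n$ is a random walk with the prescribed step distribution; (ii) the Laplace transform identity $\mathbb{E}^*(e^{pS_1})=1-\kappa(p+p_0)$; and (iii) the conditional independence and exponential structure of the lifetimes along the tagged leaf. I would treat (ii) as essentially a restatement of (i) and dispatch it quickly, then spend the main effort on the random-walk increment structure in (i) and the lifetime description in (iii).

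For the increment structure, I would fix a bounded measurable test function and an $n$, and apply the master formula to a functional $\Phi\in\mathrm{H}_{n+1}$ that depends on the pair $(\chi_n,\chi_{n+1})$ only through the ratio $\chi_{n+1}/\chi_n=\widetilde\xi_{\lambda_{n+1}}$. Recalling from Definition \ref{def1} that $\xi_{ui}=\widetilde\xi_{ui}\xi_u$, so that $\ln\chi_{n+1}-\ln\chi_n=\ln\widetilde\xi_{\lambda_{n+1}}$, I would disintegrate the sum $\sum_{|u|=n+1}\xi_u^{p_0}(\cdots)$ by first summing over the child index $i$ at the last generation and using that, conditionally on the mother's mark, the family $\overline\xi_u$ has law $\nu$ independently across nodes. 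The weight $\xi_{ui}^{p_0}=\widetilde\xi_{ui}^{p_0}\xi_u^{p_0}$ factorizes, and summing $\sum_i \widetilde\xi_{ui}^{p_0}\,h(\ln\widetilde\xi_{ui})$ against $\nu$ produces exactly $\int_{\R}\langle x^{p_0}h(\ln x),\mathbf{s}\rangle\,\nu(d\mathbf{s})$, i.e.\ integration against $\widetilde\nu$; since $\nu$ is a probability measure the leftover factor $\sum_{|u|=n}\xi_u^{p_0}=M_n$ carries no extra weight ($\mathbb{E}(M_n)=1$). This shows the increment $\ln\widetilde\xi_{\lambda_{n+1}}$ has law $\widetilde\nu$ and is independent of the past, giving the random walk. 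Claim (ii) then follows by taking $h(y)=e^{py}$ and using $\int x^{p_0}x^p\,\langle\cdot,\mathbf{s}\rangle\,\nu(d\mathbf{s})=\int\langle x^{p+p_0},\mathbf{s}\rangle\,\nu=1-\kappa(p+p_0)$ by the definition of $\kappa$.

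For the lifetimes (iii), the key observation is that the tilting defining $\mathbb{P}^*$ only reweights through the sizes $\xi_u^{p_0}$, which are measurable with respect to the $\widetilde\xi$-variables alone and carry no dependence on the exponential clocks $\mathbf{e}_u$. Hence, conditionally on the whole size sequence $(\chi_n)$, the exponential variables $\mathbf{e}_{\lambda_n}$ retain their original i.i.d.\ rate-$1$ law, and by the scaling $\zeta_u=\xi_u^{-\alpha}\mathbf{e}_u$ the lifetime $\zeta_{\lambda_n}$ is exponential with parameter $\chi_n^{\alpha}$, independent across $n$. I would make this precise by extending the master formula to a functional that also integrates against the lifetime marks and verifying that the $\mathbf{e}$-integral splits off as a product of rate-$1$ exponential expectations.

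\textbf{The main obstacle} I anticipate is the bookkeeping in the disintegration step for (i): one must correctly handle that $\Phi$ lives in $\mathrm{H}_{n+1}$ while the weight $\xi_u^{p_0}$ is attached to the node at generation $n+1$, and justify the interchange of the (a priori infinite) sum over $u$ with the expectation. The cleanest route is to condition on $\mathcal{H}_{L_n}$, use the branching structure to reduce the generation-$(n+1)$ sum to a single $\nu$-integration at each surviving node of generation $n$, and then invoke the martingale property $\mathbb{E}(M_n)=1$ from Theorem \ref{theoremdecvmartingale}; the $\mathrm{L}^p$-boundedness there also licenses the Fubini-type exchange. Once the one-step computation is set up correctly, iterating it to obtain the full random-walk (rather than merely single-increment) law and the conditional lifetime law is routine.
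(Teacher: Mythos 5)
Your proposal is correct and follows essentially the same route as the paper, which gives no separate argument but defers to Proposition 1.6 of Bertoin's book: one computes directly from the defining identity $\mathbb{E}^{*}(\Phi(M,\lambda))=\mathbb{E}(\sum_{|u|=n}\Phi(M,u)\xi_{u}^{p_{0}})$, factorizes $\xi_{ui}^{p_{0}}=\widetilde{\xi}_{ui}^{p_{0}}\xi_{u}^{p_{0}}$ via the branching property, and observes that the tilting weights do not involve the exponential clocks. The only cosmetic point is that the normalization $\int\langle x^{p_{0}},\mathbf{s}\rangle\,\nu(d\mathbf{s})=1$ comes from $\kappa(p_{0})=0$ (equivalently $\mathbb{E}(M_{n})=1$), not from $\nu$ being a probability measure, but you invoke the correct fact in the parenthetical.
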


We now see that we can use this proposition to obtain the
description of $\chi (t)$ using a Lamperti transformation. Let
$$\eta_{t}:=S\circ N_{t},\ \ \ \ t\geq 0,$$ with $N$ a Poisson
process with parameter 1 which is independent of the random walk
$S$; for probabilities and expectations related to $\eta$ we use the
notation $P$ and $E$. The process $ (\chi(t),t\geq 0)$ is Markovian
and enjoys a scaling property. More precisely under $
\mathbb{P}_{x}^{*}$ we get that
\begin{equation}\chi(t)\overset{(d)}{=}\exp(\eta_{\tau(tx^{-\alpha})}),\ \ \ t\geq 0,\label{lamperti}\end{equation}where
$\eta$ is the compound Poisson defined above and $\tau$ the
time-change defined implicitly by
\begin{equation}t=\int_{0}^{\tau(t)}\exp(\alpha\eta_{s})ds,\ \ \ t\geq
0.\label{lampertit}\end{equation}

\section{Asymptotic behaviors.}
\subsection{The convergence of the size of a tagged
individual.}\label{subsect61}

\noindent Let
 \[\kappa^{'}(p_{0})=-\int_{\R}\pt x^{p_{0}}\ln(x), \mathbf{s}\gd\nu (d\mathbf{s})\] denote the derivative of $\kappa$ at the Malthusian parameter $p_{0}$.

In this part we focus on the asymptotic behavior of the size of a
tagged individual. In this direction, the quantity
$\varpi_{t}=e^{\alpha \eta_{t}}$ plays an important role, as it
appears at the time  change of the Lamperti transformation (see
(\ref{lampertit})), as we see in the next proposition:
\begin{prop}\label{propconvergencekappa}
Suppose that  $\alpha > 0$, that  the support of $\nu$ is not a discrete subgroup $r
\mathbb{Z}$ for any $r>0$ and that $0<\kappa^{'}(p_{0})<\infty$.
Then
 for every $y>0$, under
  $ \mathbb{P}_{y}^{*}$, $t^{1/\alpha}\chi(t)$ converges in law as $t\rightarrow \infty$ to a random variable $Y$ whose law is specified by

 $$
  \mathbb{E}(k(Y^{\alpha}))=\frac{1}{\alpha m_{1}}E(k(I)I^{-1}),$$  for every measurable function
  $k:\mathbb{R}_{+} \rightarrow  \mathbb{R}_{+}$,
  with $I:=\int_{0}^{\infty}\exp(\alpha \eta_{s})ds$ and $m_{1}:=E(\eta_{1} )=-\kappa^{'}(p_{0})$.

\end{prop}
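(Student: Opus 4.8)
The plan is to exploit the Lamperti representation (\ref{lamperti})--(\ref{lampertit}) established just above, which expresses $\chi(t)$ under $\mathbb{P}_y^*$ as $\exp(\eta_{\tau(ty^{-\alpha})})$ where $\tau$ is the time-change inverting $t=\int_0^{\tau(t)}\exp(\alpha\eta_s)\,ds$. By the scaling property it suffices to treat the case $y=1$, since replacing $y$ by $1$ only rescales the time argument by $y^{-\alpha}$, which is irrelevant in the $t\to\infty$ limit. Set $\varpi_t=e^{\alpha\eta_t}$ and $A_t=\int_0^t\varpi_s\,ds$, so that $\tau=A^{-1}$ and $A_\infty=I=\int_0^\infty e^{\alpha\eta_s}\,ds$. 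The key point is that $\eta_t=S\circ N_t$ is a compound Poisson process whose underlying random walk $S$ has, by Proposition~\ref{propSn}, negative drift $E(\eta_1)=m_1=-\kappa'(p_0)<0$; hence $\eta_t\to-\infty$ and the exponential functional $I$ is an a.s.\ finite, strictly positive random variable. Consequently $A_t\uparrow I<\infty$, and for any fixed large level $c$ the time-change satisfies $\tau(c)\to\infty$ as $c\to A_\infty=I$ from below.

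\textbf{The core computation.}

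I would first rewrite the target quantity in terms of the time-change. Since $t^{1/\alpha}\chi(t)=\bigl(t\,\chi(t)^\alpha\bigr)^{1/\alpha}=\bigl(t\,\varpi_{\tau(t)}\bigr)^{1/\alpha}$, it is natural to study $t\,\varpi_{\tau(t)}$ and show it converges in law. The decisive identity is obtained by differentiating the Lamperti relation: at $s=\tau(t)$ we have $ds/dt = 1/\varpi_{\tau(t)}$, so $t\,\varpi_{\tau(t)}$ measures how the clock $A$ behaves near its terminal value $I$. Because $\eta$ is a step process that jumps down to $-\infty$ over time, near the terminal time the integral $A_t$ approaches $I$, and a renewal/overshoot analysis of the random walk $S$ at the level where it last exceeds a given height governs the limiting behaviour. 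This is exactly the situation handled by the theory of exponential functionals of Lévy (here compound Poisson) processes and of the associated stationary measure for the Lamperti transform; the non-lattice hypothesis on the support of $\nu$ guarantees that the overshoot distribution has a genuine (non-arithmetic) limit and no oscillation occurs.

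\textbf{Identifying the limit law via a size-biasing argument.}

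To pin down the precise law, I would test against an arbitrary measurable $k:\mathbb{R}_+\to\mathbb{R}_+$ and compute $\lim_{t\to\infty}\mathbb{E}\bigl(k\bigl(t\,\chi(t)^\alpha\bigr)\bigr)$. Using the change of variables $t\mapsto\tau(t)$, i.e.\ $t=A_s$ and $dt=\varpi_s\,ds$, one converts the time-average into an integral over the whole trajectory weighted by $\varpi_s\,ds=e^{\alpha\eta_s}\,ds$; the ergodic/renewal theorem for the time-stationary version of the process then produces the factor $1/(\alpha m_1)$ (the reciprocal of the mean drift, normalised by $\alpha$ from the change of variable in the exponent) together with the size-biasing by $I^{-1}$ coming from the terminal normalisation. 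Concretely this yields
$$
\lim_{t\to\infty}\mathbb{E}\bigl(k\bigl(t\,\chi(t)^\alpha\bigr)\bigr)=\frac{1}{\alpha m_1}\,E\bigl(k(I)\,I^{-1}\bigr),
$$
which, upon writing $Y^\alpha=\lim t\,\chi(t)^\alpha$, is exactly the stated characterisation of the law of $Y$.

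\textbf{The main obstacle.}

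The delicate step is making the overshoot/renewal limit rigorous: one must show that $t\,\varpi_{\tau(t)}$ genuinely converges in distribution rather than merely along subsequences, and here the non-lattice assumption is essential to invoke the (Blackwell/key) renewal theorem and rule out periodic oscillations. A clean way to organise this is to recognise that the quantity $\int_0^\infty e^{\alpha\eta_s}\,ds=I$ satisfies the well-known fixed-point (self-similarity) relation for exponential functionals, and that the limit law of the Lamperti transform of a self-similar Markov process at large times is the stationary law described by Bertoin--Yor type formulas; the hypothesis $0<\kappa'(p_0)<\infty$ is precisely what ensures $0<m_1<\infty$, so that $I$ has the integrability needed for $E(I^{-1})<\infty$ and the limiting measure is well defined and a probability measure after normalisation. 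I would therefore lean on the established theory of self-similar Markov processes (as announced in the introduction) to supply this convergence, reducing the proof to the bookkeeping of the change of variables and the identification of the constant $1/(\alpha m_1)$.
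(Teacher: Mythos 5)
Your proposal follows essentially the same route as the paper: after reducing to the Lamperti representation of Proposition \ref{propSn} and (\ref{lamperti}) and observing that $E(\eta_1)=-\kappa'(p_0)<0$, both you and the paper obtain the convergence and the identification of the limit law by invoking Theorem 1 of Bertoin and Yor \cite{beryor} on entrance laws of positive self-similar Markov processes (the paper applies it to $\chi(t)^{-1}$, whose underlying L\'evy process has positive mean), so your intermediate renewal/overshoot discussion is heuristic scaffolding for the same citation rather than an independent argument. One small slip to fix: having correctly asserted $m_1<0$, you later write $0<m_1<\infty$; the normalising constant should be read as $1/(\alpha|m_1|)=1/(\alpha\,\kappa'(p_0))$.
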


\begin{proof}

As  $-\kappa^{'}(p_{0})$ is  the mean of the step distribution of
the random walk $S_{n}$ (see Proposition \ref{propSn}),   therefore
$\kappa^{'}(p_{0})>0$ imply that $E(-\eta_{1} )>0$ thus  the
assumption of Theorem 1  in the works of Bertoin and Yor
 \cite{beryor} is fulfilled by  the self-similar Markov process $\chi (t)^{-1}$, which gives the result.
\end{proof}

 We could also  try to use the same method as the one used
in \cite{bergne} for which we need  Proposition 1.7 \cite{ber}. But
in this latter we needed  $\mathbb{E}(\pt x^{p}, X(t)\gd)$ to be
 finite when $p$ is large, and its derivative to be completely monotone. But here neither of these requirements is necessarily true as $\kappa$ is
not necessarily positive when $p$ is large. This explains why we
have to use a different
 method.

\begin{rk}\label{cas=0}
 In the case
 $\kappa^{'}(p_{0})=0$ we can extend this proposition. More precisely if $\int_{\R}\pt x^{p_{0}}|\ln(x)|,\mathbf{s}\gd\nu(d\mathbf{s})<\infty$, $$J:=\int_{1}^{\infty}\frac{x \nu^{-}((x,\infty))dx}{1+\int_{0}^{x}dy\int_{y}^{\infty}\nu^{-}((-\infty,-z))dz}<\infty,$$
(where $\nu^{-}$ is the image of $\widetilde{\nu}$ by the map
$u\rightarrow -u$ and $\widetilde{\nu}$ is defined in Proposition
\ref{propSn}) and $E\left(\log^{+}\int_{0}^{T_{1}}
e^{-\eta_{s}}ds\right)<\infty$ (with $T_{z}:=\inf\{t : -\eta_{t}\geq
z\}$)
 hold then,
 for any $y>0$
  under $ \mathbb{P}_{y}^{*}$, $t^{1/\alpha}\chi(t)$ converge in law as $t\rightarrow
  \infty$, to a random variable $\tilde{Y}$ whose law is specified
  by

 for any bounded and continuous function $k$ and for $t> 0$:
$$\mathbb{E}(k(\tilde{Y}^{\alpha}))=\lim_{\lambda\rightarrow 0}\frac{1}{\lambda}E(I_{\lambda}^{-1}k(I_{\lambda})),$$
  where $I_{\lambda}=\int_{0}^{\infty}\exp(\alpha \eta_{s}-\lambda s)ds$.

The proof is the same as the previous one using Theorem 1 and
Theorem 2 from the works of Caballero and Chaumont \cite{cabcha}
instead of \cite{beryor}.
\end{rk}

\subsection{Convergence of the mean measure and $\mathrm{L}^{p}$-convergence.}
We encode the configuration of masses $X(t)=\{(X_{i}(t))_{1\leq
i\leq \sharp \mathbf{X}(t)} \}$ by the weighted empirical measure
 $$\sigma_{t}:=\sum_{i=1}^{\sharp \mathbf{X}(t)}
X_{i} ^{p_{0}}( t)
\delta_{t^{1/\alpha}X_{i} (t)}
$$
which has total mass $M(t)$.

 The associated mean measure
$\sigma_{t}^{*}$ is defined by the formula
$$\int_{0}^{\infty}k(x)\sigma_{t}^{*} (dx)=\mathbb{E}\left( \int_{0}^{\infty}k (x)\sigma_{t} (dx)\right)$$
which is required to hold for all compactly supported continuous
functions $k$. Since $M(t)$ is a martingale, $\sigma_{t}^{*}$ is a
probability measure. We interest us to the convergence of this
measure. This convergence was already established in the case of
binary conservative fragmentation (see the results of Brennan and
Durrett \cite{bredur1} and \cite{bredur2}). A very useful tool  for
this is the renewal theorem, for which they needed the fact that the
process $\chi (t)$ is decreasing; here we no longer have  such a
monotonicity property. See also Theorem 2 and 5 of \cite{bergne},
Theorem 1.3 of \cite{ber} and Proposition 4 of \cite{kre} for
Theorems about empirical measure for measure which have a
conservative property $\nu (s_{i}\leq 1 \ \forall
i\in\mathbb{N})=1$.

 Nonetheless, with Proposition \ref{propconvergencekappa} and Lemma \ref{lemmeequaliteloikappa}, we easily get:

\begin{cor}\label{coro6}
 With the assumptions of Proposition \ref{propconvergencekappa}   we get:

\begin{enumerate}
\item
The measures $\sigma_{t}^{*}$ converge weakly, as
$t\rightarrow\infty$, to the distribution of $Y$ i.e. for any
continuous bounded function
$k:\mathbb{R}_{+}\rightarrow\mathbb{R}_{+}$ , we have:
   $$
  \mathbb{E}\left(\pt x^{p_{0}}k(t^{1/\alpha} x),X(t)\gd\right)\underset{t\rightarrow\infty}{\rightarrow}  \mathbb{E}(k(Y)).$$ \item For all $p_{+}>p>p_{0}$: $$t^{(p-p_{0})/\alpha}\mathbb{E}\left(\pt x^{p},X(t)\gd\right)\underset{t\rightarrow\infty} {\rightarrow}\mathbb{E}
  (Y^{p-p_{0}}).$$

\end{enumerate}

\end{cor}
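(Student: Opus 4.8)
The plan is to reduce both parts to the weak convergence of the tagged fragment $t^{1/\alpha}\chi(t)$ furnished by Proposition~\ref{propconvergencekappa}, through the size-biasing identity of Lemma~\ref{lemmeequaliteloikappa}. The key observation is that applying Lemma~\ref{lemmeequaliteloikappa} to the dilated function $x\mapsto k(t^{1/\alpha}x)$ gives, for every measurable $k$,
\[\mathbb{E}\left(\langle x^{p_0}k(t^{1/\alpha}x),X(t)\rangle\right)=\mathbb{E}^{*}\left(k(t^{1/\alpha}\chi(t))\right),\]
the value of $k$ at $0$ being immaterial since the atoms of $X(t)$ and $\chi(t)$ are positive. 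For the first assertion I would take $k$ continuous and bounded: the right-hand side is then a bounded continuous functional of $t^{1/\alpha}\chi(t)$, so Proposition~\ref{propconvergencekappa} (with $y=1$) yields convergence to $\mathbb{E}(k(Y))$. Since $\int k\,d\sigma_t^{*}$ is exactly the left-hand side, this is the weak convergence of $\sigma_t^{*}$ to the law of $Y$.

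For the second assertion the same identity with $k(y)=y^{p-p_0}$ reads
\[t^{(p-p_0)/\alpha}\mathbb{E}\left(\langle x^{p},X(t)\rangle\right)=\mathbb{E}^{*}\left((t^{1/\alpha}\chi(t))^{p-p_0}\right),\]
so the claim becomes convergence of the $(p-p_0)$-th moment of $t^{1/\alpha}\chi(t)$ to that of $Y$. The lower bound is immediate: for the continuous bounded truncation $k_A(y):=\min(y^{p-p_0},A)$ the first part gives $\mathbb{E}^{*}(k_A(t^{1/\alpha}\chi(t)))\to\mathbb{E}(k_A(Y))$, hence $\liminf_{t}\mathbb{E}^{*}((t^{1/\alpha}\chi(t))^{p-p_0})\geq\mathbb{E}(k_A(Y))$, and monotone convergence as $A\to\infty$ gives $\liminf_t\geq\mathbb{E}(Y^{p-p_0})$. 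The matching upper bound is precisely where uniform integrability of the family $\{(t^{1/\alpha}\chi(t))^{p-p_0}:t\geq 1\}$ under $\mathbb{P}^{*}$ is needed.

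To obtain this uniform integrability I would bound a strictly higher moment uniformly in $t$. Since $p<p_{+}$ one can fix $q\in(p-p_0,\,p_{+}-p_0)$, and Markov's inequality gives
\[\mathbb{E}^{*}\big[(t^{1/\alpha}\chi(t))^{p-p_0};\,t^{1/\alpha}\chi(t)>A\big]\leq A^{-(q-(p-p_0))}\,\mathbb{E}^{*}\big((t^{1/\alpha}\chi(t))^{q}\big),\]
so it suffices to show $\sup_{t\geq 1}\mathbb{E}^{*}((t^{1/\alpha}\chi(t))^{q})<\infty$. This uniform bound is the main obstacle, and it is here that the hypothesis $p<p_{+}$ is essential. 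Via the Lamperti representation~(\ref{lamperti}) the quantity $\mathbb{E}^{*}(\chi(t)^{q})$ is an expectation of a functional of the compound Poisson process $\eta$, whose Laplace transform is $E(e^{r\eta_1})=e^{-\kappa(r+p_0)}$; this is strictly less than $1$ for $r\in(0,p_{+}-p_0)$ because $\kappa>0$ on $(p_0,p_{+})$, and that negativity is exactly what makes the exponential functional $I=\int_0^{\infty}e^{\alpha\eta_s}ds$ have a finite moment of the relevant order and keeps $\mathbb{E}^{*}((t^{1/\alpha}\chi(t))^{q})$ bounded as $t\to\infty$. Once the bound is established, uniform integrability upgrades the convergence in law to $\mathbb{E}^{*}((t^{1/\alpha}\chi(t))^{p-p_0})\to\mathbb{E}(Y^{p-p_0})$, and the limiting moment is read off from Proposition~\ref{propconvergencekappa} as $\mathbb{E}(Y^{p-p_0})=\frac{1}{\alpha m_1}E(I^{(p-p_0)/\alpha-1})$, finite in the same range. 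Everything apart from the uniform higher-moment bound is a routine combination of Lemma~\ref{lemmeequaliteloikappa}, Proposition~\ref{propconvergencekappa} and truncation.
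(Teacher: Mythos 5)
Your reduction of both assertions to the behaviour of the tagged fragment via Lemma~\ref{lemmeequaliteloikappa} is exactly the route the paper intends (its entire proof of Corollary~\ref{coro6} is the one sentence preceding the statement), and your treatment of the first assertion is complete: the identity $\mathbb{E}\bigl(\langle x^{p_0}k(t^{1/\alpha}x),X(t)\rangle\bigr)=\mathbb{E}^{*}\bigl(k(t^{1/\alpha}\chi(t))\bigr)$ combined with the weak convergence of Proposition~\ref{propconvergencekappa} settles it for bounded continuous $k$, and your remark that the requirement $k(0)=0$ in the lemma is harmless (subtract $k(0)$ and use $\mathbb{E}(M(t))=1$) is correct.

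For the second assertion there is a genuine gap. You rightly identify that everything hinges on the uniform bound $\sup_{t\geq 1}\mathbb{E}^{*}\bigl((t^{1/\alpha}\chi(t))^{q}\bigr)<\infty$ for some $q\in(p-p_0,\,p_{+}-p_0)$, but you do not prove it; the paragraph offered in its place is a heuristic. The inequality $E(e^{r\eta_1})=e^{-\kappa(r+p_0)}<1$ for $r\in(0,p_{+}-p_0)$ does give finiteness of the relevant moments of the exponential functional $I$, hence finiteness of the limit $\mathbb{E}(Y^{p-p_0})$, but it does not by itself control $\mathbb{E}^{*}\bigl(e^{q\eta_{\tau(t)}}\bigr)$ uniformly in $t$: the random time change $\tau(t)$ in~(\ref{lamperti}) is correlated with $\eta$ precisely so as to favour large values of $\eta$ (the clock $\int_0^{\cdot}e^{\alpha\eta_s}ds$ must reach $t$), and this correlation is where the difficulty sits. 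Note that the obvious a priori estimate is useless here: $\langle x^{p_0+q},X(t)\rangle$ is only a supermartingale (cf.\ the Remark after Theorem~\ref{theoremdecvmartingale}), so Lemma~\ref{lemmeequaliteloikappa} together with $\mathbb{E}\bigl(\langle x^{p_0+q},X(t)\rangle\bigr)\leq 1$ yields merely $\mathbb{E}^{*}\bigl((t^{1/\alpha}\chi(t))^{q}\bigr)\leq t^{q/\alpha}$, and the entire content of assertion~2 is that this quantity in fact stays bounded. Closing the argument requires a genuine quantitative input, for instance the Markov renewal equation satisfied by $t\mapsto t^{q/\alpha}\mathbb{E}^{*}(\chi(t)^{q})$ (the Brennan--Durrett route, which the paper itself warns is delicate here because $\chi$ is not monotone) or known moment asymptotics for positive self-similar Markov processes. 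As written, the lower bound in assertion~2 is proved and the upper bound is asserted.
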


We now formulate a more precise result concerning the convergence
of the empirical measure:

\begin{theo}\label{theoprincipale}
Under the same assumptions as in Proposition
\ref{propconvergencekappa}  we get that for every  bounded
continuous function
  $k$:

  $$\mathrm{L}^{p}-\lim_{t\rightarrow\infty}\int_{0}^{\infty}k(x)\sigma_{t}(dx)=M_{\infty} \mathbb{E}(k(Y))= \frac{M_{\infty}}{\alpha m}E(k(I)I^{-1}),$$
  for some $p>1.$

\end{theo}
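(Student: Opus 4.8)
The plan is to exploit the branching and scaling structure at a fixed intermediate time $s$, let the terminal time $t$ go to infinity first, and only afterwards send $s\to\infty$. Write $F_t:=\int_0^\infty k(x)\sigma_t(dx)=\langle x^{p_0}k(t^{1/\alpha}x),X(t)\rangle$, and recall $M(t)=\langle x^{p_0},X(t)\rangle$. Fix $s>0$ and condition on $\mathcal F_s$, whose atoms I denote $y_i=X_i(s)$, $1\le i\le\sharp X(s)$. By the branching Markov property (Theorem \ref{theodef}) the contributions of the distinct atoms are conditionally independent, so $F_t=\sum_i\Xi_i^{(t)}$, where $\Xi_i^{(t)}$ is the analogue of $F_t$ for an independent process issued from $y_i$ and run for time $t-s$. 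The scaling property of Theorem \ref{theodef} rewrites each $\Xi_i^{(t)}$, in law, as $y_i^{p_0}\int k(c_t\,x)\,\hat\sigma^{(i)}_{r_i}(dx)$, where $\hat\sigma^{(i)}$ is the weighted empirical measure of an independent $\mathbb P_1$-process, $r_i:=y_i^\alpha(t-s)$ is the internal time and $c_t:=(t/(t-s))^{1/\alpha}\to1$ is a deterministic factor common to all $i$.

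First I treat the conditional mean. Since all atoms satisfy $y_i>0$ we have $r_i\to\infty$ as $t\to\infty$, and the mean-measure convergence of Corollary \ref{coro6}(1), together with $c_t\to1$ (harmless because the limiting law of $Y$ is a genuine probability measure, hence tight, and $k$ is bounded continuous), yields $\mathbb E(\Xi_i^{(t)}\mid\mathcal F_s)\to y_i^{p_0}\mathbb E(k(Y))$ for each $i$. As the sum over $i$ is almost surely finite, this gives $\mathbb E(F_t\mid\mathcal F_s)\to M(s)\,\mathbb E(k(Y))$ as $t\to\infty$. Because $|F_t|\le\|k\|_\infty M(t)$ and $M$ is a martingale, $|\mathbb E(F_t\mid\mathcal F_s)|\le\|k\|_\infty M(s)\in\mathrm L^p$ uniformly in $t\ge s$, so this convergence also holds in $\mathrm L^p$ by dominated convergence.

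The crux is the fluctuation term $F_t-\mathbb E(F_t\mid\mathcal F_s)=\sum_i Z_i$, with $Z_i:=\Xi_i^{(t)}-\mathbb E(\Xi_i^{(t)}\mid\mathcal F_s)$ a conditionally independent, conditionally centred sum. Taking for $p$ the (without loss of generality $\le 2$) exponent of $\mathrm L^p$-boundedness of the intrinsic martingale, I would apply the von Bahr--Esseen inequality conditionally on $\mathcal F_s$ to get $\mathbb E(|\sum_iZ_i|^p\mid\mathcal F_s)\le 2\sum_i\mathbb E(|Z_i|^p\mid\mathcal F_s)$. Using $|\Xi_i^{(t)}|\le\|k\|_\infty y_i^{p_0}\hat M^{(i)}(r_i)$, the bound $\mathbb E(|Z_i|^p\mid\mathcal F_s)\le 2^p\mathbb E(|\Xi_i^{(t)}|^p\mid\mathcal F_s)$, and $\sup_r\mathbb E_1(M(r)^p)<\infty$ (Corollary \ref{convergencemartingale}), each term is bounded by a constant times $y_i^{pp_0}$, whence $\mathbb E(|F_t-\mathbb E(F_t\mid\mathcal F_s)|^p)\le C\,\mathbb E(\langle x^{pp_0},X(s)\rangle)$, uniformly in $t$. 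Here is where the choice of $p$ pays off: the proof of Theorem \ref{theoremdecvmartingale} selects $p>1$ precisely so that $\kappa(pp_0)>0$, i.e. $p_0<pp_0<p_+$; Corollary \ref{coro6}(2) then applies to the exponent $pp_0$ and gives $\mathbb E(\langle x^{pp_0},X(s)\rangle)\sim \mathbb E(Y^{pp_0-p_0})\,s^{-(pp_0-p_0)/\alpha}\to0$ as $s\to\infty$, since $\alpha>0$ and $pp_0-p_0>0$.

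It remains to assemble the estimates through the triangle inequality
$$\|F_t-M_\infty\mathbb E(k(Y))\|_p\le \|F_t-\mathbb E(F_t\mid\mathcal F_s)\|_p+\|\mathbb E(F_t\mid\mathcal F_s)-M(s)\mathbb E(k(Y))\|_p+|\mathbb E(k(Y))|\,\|M(s)-M_\infty\|_p.$$
The first term is bounded by some $\epsilon_1(s)\to0$ uniformly in $t$ by the previous paragraph, and the third by $\|M(s)-M_\infty\|_p\to0$ (Corollary \ref{convergencemartingale}); given $\epsilon>0$, I would fix $s$ making these two contributions $<\epsilon/2$ and then let $t\to\infty$ to kill the middle term. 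This proves $\mathrm L^p$-convergence of $F_t$ to $M_\infty\mathbb E(k(Y))$, and the final identity $M_\infty\mathbb E(k(Y))=\frac{M_\infty}{\alpha m}E(k(I)I^{-1})$ is just Proposition \ref{propconvergencekappa}. The main obstacle is the fluctuation bound, and the essential point making it work is that the same $p$ which bounds the intrinsic martingale forces $pp_0<p_+$, so that the high moment $\langle x^{pp_0},X(s)\rangle$ decays.
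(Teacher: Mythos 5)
Your proof is correct, and it follows the same global strategy as the paper: decompose $\int k\,d\sigma_t$ through the branching property at an intermediate time into a sum of conditionally independent contributions, identify the conditional mean via the tagged-fragment convergence (Corollary \ref{coro6} / Proposition \ref{propconvergencekappa}), and show the centred fluctuations vanish in $\mathrm{L}^p$ using that $\mathbb{E}\left(\pt x^{pp_{0}},\mathbf{X}(\cdot)\gd\right)\to 0$ because $p_0<pp_0<p_+$. The differences are in the execution. The paper splits $A_{t+s}$ at the large time $t$, keeps the descendant horizon $s$ fixed, and disposes of the fluctuations by citing Lemma 1.5 of \cite{ber} wholesale (its hypothesis $\mathbb{E}(\sum_i\lambda_i(t)^p)\to 0$ playing exactly the role of your $\mathbb{E}(\pt x^{pp_0},\mathbf{X}(s)\gd)\to 0$). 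You instead split at a fixed time $s$, let the terminal time $t\to\infty$ first, and prove the fluctuation bound by hand via a conditional von Bahr--Esseen inequality --- which is essentially the content of Bertoin's lemma, so you have inlined its proof rather than found a new mechanism. What your ordering buys is transparency in the conditional-mean step: each descendant's internal time $r_i=y_i^\alpha(t-s)$ visibly tends to infinity, so Corollary \ref{coro6}(1) applies directly, whereas the paper's statement $\lim_{t\to\infty}\mathbb{E}^{*}(k((t+s)^{1/\alpha}y\chi(s)))=\mathbb{E}(k(Y))$ with $s$ fixed only makes sense after unwinding the scaling in the same way you do; your version also makes explicit the final $\epsilon/3$ assembly with $\Vert M(s)-M_\infty\Vert_p\to 0$, which the paper leaves implicit. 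The one point worth stating more carefully is the replacement of $k(u)$ by $k(c_t u)$ with $c_t\to 1$ inside a weakly convergent family of probability measures; your tightness remark covers it, but it deserves the one-line estimate $\sup_{u\leq K}|k(c_tu)-k(u)|+2\Vert k\Vert_\infty\sigma^{*}_{r}([K,\infty))$.
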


\begin{rk} A slightly different version of  Corollary \ref{coro6} and Theorem \ref{theoprincipale} exists also under the assumptions  in Remark  \ref{cas=0}.
\end{rk}

See also Asmussen and Kaplan \cite{asmkap} and \cite{asmkap2} for
a closely related result.
\begin{proof}

We follow the same method as Section 1.4.  in \cite{ber} and in this
direction we use Lemma 1.5 there: for $(\lambda(t))_{t\geq
0}=(\lambda_{i}(t),i\in\mathbb{N})_{t\geq 0}$ a sequence of
non-negative random variables such that for fixed $p>1$
$$\sup_{t\geq 0}\mathbb{E}\left(\left( \sum_{i=1}^{\infty} \lambda_{i}(t)\right)^{p}\right)<\infty\ \ \ \text{and}\ \ \lim_{t \rightarrow \infty}\mathbb{E}\left( \sum_{i=1}^{\infty} \lambda_{i}(t)\right)=0,$$ and for $(Y_{i}(t), i\in\mathbb{N})$ a sequence of random variables which are independent conditionally on $\lambda(t)$, we assume that there exists a sequence $(\overset{-}{Y_{i}},i\in\mathbb{N})$ of i.i.d variables in $\mathrm{L}^{p}(\mathbb{P}),$ which is independent  of $\lambda(t)$  for each fixed $t$, and such that $|Y_{i}(t)
|\leq \overset{-}{Y}_{i}$ for all $i\in\mathbb{N}$ and $t\geq 0$.

\medskip

Then we know from Lemma 1.5 in \cite{ber} that
\begin{equation}
\lim_{t \rightarrow \infty} \sum_{i=1}^{\infty} \lambda_{i}(t)(Y_{i}(t)-\mathbb{E}\left(Y_{i}(t)|\lambda(t))\right)=0 .   \label{lemma15}  \end{equation}

Now, let $k$ be a continuous function bounded by 1 and let
$$A_{t}:=\pt x^{p_{0}}k(t^{1/\alpha} x) ,X(t)\gd . $$

By application of the Markov property at time $t$ for $A_{t+s}$ and
the self-similarity property of the process $\mathbf{X}$ we can
rewrite $A_{t+s}$ as

$$\sum_{i=1}^{\sharp \mathbf{X}(t)}\lambda_{i}(t)Y_{i}(t,s)$$
where
 $\lambda_{i}(t):=X_{i}^{p_{0}}(t)$ and
$$Y_{i}(t,s):= \pt x^{p_{0}} k((t+s)^{1/\alpha}X_{i}(t)x), \mathbf{X}_{i,.}(s)\gd,$$
with $\mathbf{X}_{1,.}$, $\mathbf{X}_{2,.}$, ... a sequence of
i.i.d. copies of $\mathbf{X}$ which is independent of
$\mathbf{X}(t)$.

By Theorem \ref{theoremdecvmartingale} we get that
$$\sup_{t\geq 0} \mathbb{E}\left(\left(\sum_{i=1}^{\sharp \mathbf{X}(t)}\lambda_{i}(t)\right)^{p}\right)<\infty.$$
By the last corollary we also obtain that
$$ \mathbb{E}\left(\sum_{i=1}^{\sharp \mathbf{X}(t)}\lambda_{i}^{p}(t)\right)\sim t^{-(p-1)p_{0}}\mathbb{E}
(\chi^{(p-1)p_{0}} (1))\rightarrow 0,$$ as $t\rightarrow \infty$.

\noindent  Moreover  the variables $Y_{i}(t,s)$ are uniformly bounded by
$$Y_{i}= \sup_{s\geq 0}\pt x^{p_{0}},\mathbf{X}_{i,.}(s)\gd,$$
 which are i.i.d. variables and also bounded in $\mathrm{L}^{p}(\mathbb{P})$ thanks to Doob's inequality (as  $\pt x_{p_{0}} , \mathbf{X}_{i,.}(s) \gd$ is a martingale bounded in  $\mathrm{L}^{p}(\mathbb{P})$).

 Thus we may apply (\ref{lemma15}), which reduces the study to that of the asymptotic behavior of:
 $$\sum_{i=1}^{\sharp \mathbf{X}(t)}\lambda_{i}(t)\mathbb{E}(Y_{i}(t,s)| \mathbf{X}(t)),$$
as $ t$ tends to $\infty$. On the event $\{X_{i}(t)=y\}$, we get
$$\mathbb{E}(Y_{i}(t,s)|\mathbf{X}(t))=\mathbb{E}\left(\pt x^{p_{0}}k((t+s)^{1/\alpha }yx), \mathbf{X}(s)\gd \right).$$
Then by Lemma \ref{lemmeequaliteloikappa}:
$$\mathbb{E}\left(\pt x^{p_{0}}k((t+s)^{1/\alpha }yx),\mathbf{X}_{i,.}(s)\gd \right)=\mathbb{E}^{*}\left(k\left((t+s)^{1/\alpha}y\chi(s)\right)\right).$$
With Proposition \ref{propconvergencekappa}, we obtain
$$\lim_{t\rightarrow\infty}\mathbb{E}^{*}\left(k\left((t+s)^{1/\alpha}y\chi(s)\right)\right)=\mathbb{E}\left(k\left(Y\right)\right)
.$$ Moreover recall from Corollary \ref{convergencemartingale} that
$\sum_{i=1}^{\sharp \mathbf{X}(t)}\lambda_{i}(t)$  converges to
$M_{\infty}$  in $\mathrm{L}^{p}(\mathbb{P}).$ Therefore we finally
get that when $t$ goes to infinity:

 $$\sum_{i=1}^{\sharp \mathbf{X}(t)}\lambda_{i}(t)\mathbb{E}(Y_{i}(t,s)| X(t))\sim\mathbb{E}\left(k\left(Y\right)\right)\sum_{i=1}^{\sharp \mathbf{X}(t)}\lambda_{i}(t)\sim\mathbb{E}
\left( k \left(Y\right)\right)M_{\infty}.$$

\end{proof}

\begin{appendix}
\section{Further results about the intrinsic process}
We will give more general properties about the intrinsic process
$\{M_{Q},\ Q\subset \mathcal{U}\},$ $M_{Q}=\sum_{u\in
M}\xi_{u}^{p_{0}}.$ For a line $Q$, $\{M_{Q}\}$ is adapted to the
filtration $\{\mathcal{H}_{L}\}$. We use the abuse of notation that
$M_{n}$ stand for the process $M_{L_{n}}$, with
$L_{n}=\{u\in\mathcal{U}:\ |u|=n\}$ the labels of the $n$-th
generation. We introduce new definitions, we say that a line $Q$
\textit{covers} $L$, if $Q\succeq L$ and any individual stemming
from $L$ either stems from $Q$ or has progeny in $Q$. If $Q$ covers
the ancestor it may simply be called \textit{covering}. Let $
\mathcal{ C}_{0}$ be the class of covering lines with finite maximal
generation. We denoted the generation of $Q$: $|Q|=\sup_{u\in
Q}|u|$. The origin of the intrinsic martingale comes from real time
martingale of Nerman \cite{ner}.

Also for $r\in \mathbb{R}_{+}^{*}$, let $\vartheta_{r}$ be the
structural measure:
$$\vartheta_{r}(B):=\mathbb{E}_{r}(\sharp \{u\in\mathcal{U} : \xi_{u}\in B\})=\sum_{i=1}^{\infty}\nu(rs_{i}\in
B) \ \ \ \text{for} \ B\subset \mathcal{B},$$ where $\mathcal{B}$ is
the Borel algebra on  $\Rpe$. Let the reproduction measure $ \mu$ on
the sigma-field $\mathcal{B}\otimes\mathcal{B}$ be such that for
every $r\geq 0$:$$\mu (r, dv\times du):=r^{\alpha}
e^{-r^{\alpha}u}du\sum_{i=1}^{\infty}\nu (rs_{i}\in dv)$$ and for
any $\lambda\in\mathbb{R}$
$$\mu_{\lambda} (r,
dv\times du):=e^{-\lambda u}\mu (r, du\times dv).$$ The composition
operation $\ast $ denotes the Markov transition on the size space
$\mathbb{R}_{+}$ and convolution on the time space $\Rp$, so that:
for all $A\in\mathcal{B}$ and $B\in\mathcal{B}$,
$$\mu^{\ast 2}(s, A\times B)=\mu \ast \mu (s, A\times B)=\int_{\Rp \times\Rp} \mu (r,
A\times (B-u))\mu (s, dr\times du).$$ With the convention that the
$\ast$-power 0 is $\nbOne_{\{A\times B\}}(s,0)$ which gives all the
mass to $(s,0)$. We define the renewal measure as
$$\psi_{\lambda}:=\sum_{0}^{\infty}\mu_{\lambda}^{\ast n}.$$
Let $$\alpha^{'}:=\inf\{\lambda\ :\ \ \psi_{\lambda}(r,\Rp\times\Rp
)<\infty \ \ \text{for some} \ \ r\in \Rp \}.$$ Moreover as
\begin{equation*}
    \mu_{\lambda}(r,\Rp\times\Rp )=
    \begin{cases} mr^{\alpha }/(r^{\alpha} +\lambda )&\hbox{if }\ \lambda >-r^{\alpha}\\
    \infty &\hbox{ else}, \end{cases}
      \end{equation*}
thus \begin{equation*}
    \psi_{\lambda}(r,\Rp\times\Rp )<\infty\ \
   \hbox{if and only if}\ \lambda <(r/(m-1))^{1/\alpha} \end{equation*}
   therefore   we get $\alpha^{'}=0$. For $A\in\mathcal{B }$, let \begin{equation}\pi
(A):=\lim_{n\rightarrow \infty}\mu^{\ast n}(1, A\times \Rp
)\label{pi}\end{equation} which is well defined as $\mu^{\ast n}(1,
A\times \Rp )$ is a decreasing function in $n$ and nonnegative. Let
$h(s):=s^{p_{0}}$ for all $s\in\Rp$ and $\beta:=1$. These objects
correspond to those defined in \cite{jag}.

Recall that the Galton-Watson process $(Z_{n}, n\geq 0))$ is equal
to $(\sharp \{ u\in\mathcal{U}: \ \xi_{u}>0 \ \text{and}\ \
|u|=n\},n \geq 0)$.

\textbf{We suppose that} $$m:=\mathbb{E}(Z_{1})<\infty,$$ i.e.
$\int_{\R} \sharp s\nu (d\mathbf{s})<\infty$ this assumption is
slightly stronger than (\ref{hyposurmesurededis}), therefore we get
that:

\begin{prop}
\begin{enumerate}
\item If $L\preceq Q$ are lines, then
$$\mathbb{E}(M_{Q}| \mathcal{H}_{L})\leq M_{L}.$$

If $Q$ verifies $|Q|<\infty $ and covers $L$, then
$$\mathbb{E}(M_{Q}| \mathcal{H}_{L})= M_{L}.$$

\item For all  $s>0$, $\{M_{L};\ L\in \mathcal{C}_{0}\}$ is
uniformly $\mathbb{P}_{s}$-integrable.
\item There is a random variable $M \geq 0$ such that for
$\pi$-almost all $s>0$
$$M_{L}= \mathbb{E}_{s}(M| \mathcal{H}_{L})$$ and
$M_{L}\overset{L^{1}(\mathbb{P}_{s})}{\rightarrow} M,$ as $L\in
\mathcal{C}_{0}$ filters $(\preceq )$.  If $\varsigma_{n}\preceq
\varsigma_{n+1}\in \mathcal{C}_{0}$ and to any $x\in \mathcal{U}$
there is an $\varsigma_{n}$ such that $x$ has progeny in
$\varsigma_{n}$, $M_{\varsigma_{n}}\rightarrow M$, as $n\rightarrow
\infty$, also a.s. $\mathbb{P}_{s}$.
\end{enumerate}

\end{prop}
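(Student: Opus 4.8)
The plan is to deduce all three assertions from the general theory of Jagers~\cite{jag} (whose real-time origin is Nerman~\cite{ner}), after checking that our marked tree is an instance of his general branching process and that $h(s)=s^{p_{0}}$ is the relevant Malthusian eigenfunction. The identification is the one already exploited in Lemma~\ref{lemmearbre}: the type of $u$ is its size $\xi_{u}$, the birth time is $a_{u}$, and reproduction is governed by the kernel $\mu(r,\cdot)$ defined above. Since $\kappa(p_{0})=0$ and $\nu$ is a probability measure, $\int_{\Rpe} h(v)\,\vartheta_{s}(dv)=s^{p_{0}}\int_{\R}\langle x^{p_{0}},\mathbf{s}\rangle\,\nu(d\mathbf{s})=s^{p_{0}}(1-\kappa(p_{0}))=h(s)$, so $h$ is $\vartheta$-harmonic with eigenvalue $\beta=1$; together with the computation $\alpha'=0$ made above, this says the time-discount rate is zero and $M_{Q}=\sum_{u\in Q}\xi_{u}^{p_{0}}$ is exactly Jagers's intrinsic martingale for our tree, with $\pi$ of~(\ref{pi}) its stable type distribution and $m<\infty$ ensuring a finite mean reproduction measure.

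For part 1 I would give the short self-contained argument rather than quote~\cite{jag}. Fix lines $L\preceq Q$. As $Q\succeq L$, every $q\in Q$ is of the form $q=vw$ with a unique $v\in L$, and $\xi_{vw}^{p_{0}}=\xi_{v}^{p_{0}}(\xi_{vw}/\xi_{v})^{p_{0}}$. Grouping $M_{Q}$ over the ancestors $v\in L$ and using the branching property of Lemma~\ref{lemmearbre} (immediate for deterministic lines from the i.i.d. construction), one gets $\mathbb{E}(M_{Q}\mid\mathcal{H}_{L})=\sum_{v\in L}\mathbb{E}_{\xi_{v}}(M_{Q_{v}})$, where $Q_{v}:=\{w:vw\in Q\}$ is the trace of $Q$ on the subtree at $v$. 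By scaling, $\mathbb{E}_{\xi_{v}}(M_{Q_{v}})=\xi_{v}^{p_{0}}\,\mathbb{E}_{1}(M_{Q_{v}})$, so the claim reduces to the single-tree bound $\mathbb{E}_{1}(M_{Q'})\le 1$ for an arbitrary line $Q'$. This follows by truncation: conditioning on the ancestors $q\in Q'$ with $|q|\le n$ and using the generation martingale of Theorem~\ref{theoremdecvmartingale} gives $\mathbb{E}_{1}(\sum_{u\in Q',\,|u|\le n}\xi_{u}^{p_{0}})=\mathbb{E}_{1}(\sum_{v\in L_{n}:\,\exists q\in Q',\,q\preceq v}\xi_{v}^{p_{0}})\le\mathbb{E}_{1}(M_{n})=1$, and monotone convergence as $n\to\infty$ yields $\mathbb{E}_{1}(M_{Q'})\le 1$. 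Hence $\mathbb{E}(M_{Q}\mid\mathcal{H}_{L})\le M_{L}$. When $Q$ covers $L$ with $|Q|<\infty$, taking $n=|Q|$ turns every inequality into an equality: covering forces each generation-$n$ individual stemming from $L$ to have an ancestor in $Q$, so no mass is lost, and $\mathbb{E}(M_{Q}\mid\mathcal{H}_{L})=M_{L}$.

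For parts 2 and 3 I would invoke directly Jagers's convergence theory for the intrinsic martingale in~\cite{jag}. Once the identification of the first paragraph is in place, the uniform $\mathbb{P}_{s}$-integrability of $\{M_{L}:L\in\mathcal{C}_{0}\}$, and the existence of a limit $M\ge 0$ with $M_{L}=\mathbb{E}_{s}(M\mid\mathcal{H}_{L})$, $L^{1}(\mathbb{P}_{s})$-convergence as $L$ filters $(\preceq)$, and almost sure convergence along nested exhausting lines $\varsigma_{n}$, are precisely his conclusions for $\pi$-almost every starting type $s$ (the $\pi$-a.e.\ clause being characteristic of his stable-convergence statement). The only thing to supply is the verification of his hypotheses: the eigenvalue $\beta=1$ and $\alpha'=0$ are settled above; the $x\log x$-type moment condition is implied by the stronger bound already proved, namely $\mathbb{E}(M_{1}^{p})<\infty$ for some $p>1$ from assumption~(\ref{hypomalt2}); and $m=\mathbb{E}(Z_{1})<\infty$ furnishes the finite mean reproduction measure.

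The main obstacle is exactly this hypothesis-matching: one must confirm that the integrability and non-lattice conditions under which~\cite{jag} obtains uniform integrability and convergence over covering lines are genuinely met here, where—unlike in the fragmentation case—sizes may exceed those of the parent and $\chi(t)$ is not monotone. The $L^{p}$-boundedness of Theorem~\ref{theoremdecvmartingale} is what rescues the moment condition, and the identity $\kappa(p_{0})=0$ with $\alpha'=0$ is what makes $h(s)=s^{p_{0}}$ the correct harmonic function; granting these, parts 2 and 3 are a direct transcription of Jagers's results.
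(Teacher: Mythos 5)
Your proposal is correct and, for parts 2 and 3, follows the same route as the paper: identify the marked tree with Jagers's general branching process via the kernel $\mu$, the harmonic function $h(s)=s^{p_{0}}$ (harmonicity being exactly $\kappa(p_{0})=0$), $\beta=1$ and $\alpha'=0$, verify the $\overline{\xi}\log^{+}\overline{\xi}$ moment condition, and then quote Theorems 6.1 and 6.3 of \cite{jag}. The one place you are lighter than the paper is that the moment condition there is an integral against the stable type distribution $\pi$, namely $\mathbb{E}_{\pi}(\overline{\xi}\log^{+}\overline{\xi})<\infty$; the paper controls the $\pi$-integration using the supermartingale property of $\sum_{|u|=n}\xi_{u}^{pp_{0}}$ in addition to $\mathbb{E}(M_{1}^{p})<\infty$, whereas you only invoke the latter -- a gloss, but of the same order of terseness as the paper itself. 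Where you genuinely diverge is part 1: the paper simply cites Theorem 5.1 of \cite{jag} after noting that the Malthusian-population conditions hold, while you give a self-contained proof by decomposing $M_{Q}$ over the unique ancestors in $L$, reducing via the branching and scaling properties to the single-tree bound $\mathbb{E}_{1}(M_{Q'})\le 1$, and obtaining that bound by truncating $Q'$ at generation $n$, comparing with the generation martingale $M_{n}$ of Theorem \ref{theoremdecvmartingale}, and letting $n\to\infty$ by monotone convergence; the covering case with $|Q|<\infty$ then gives equality because no generation-$|Q|$ descendant of $L$ can have strict progeny in $Q$, so none of the mass of $M_{n}$ is lost. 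This direct argument is sound (the uniqueness of the ancestor in $L$ and the disjointness of the descendant sets of distinct elements of the line $Q'$ are what make the truncated identity exact) and has the advantage of not requiring the reader to match Jagers's hypotheses for that step, at the cost of being longer than the paper's one-line citation.
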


A consequence of the first and second points  applied for
$L_{n}=\{u\in\mathcal{U}:\ |u|=n\}$ and $L_{m}=\{u\in\mathcal{U}:\
|u|=m\}$ with $m\geq n\geq 0$, is that $M_{n}$ is a martingale and
the uniform $\mathbb{P}_{s}$-integrability of this martingale. The
third point applied for the  lines $\tau_{t}$ give the convergence
of $M(t)$ in $\mathrm{L}^{1}(\mathbb{P}_{s})$ and almost surely.

\begin{proof}

$\bullet$ First the conditions of Malthusian population are
fulfilled, thus by Theorem 5.1 in \cite{jag} we get the first point.

Let
$\overline{\xi}:=\int_{\Rp\times\Rp}h(s)r^{\alpha}e^{-tr^{\alpha}}dt
\vartheta_{1}(ds)=\sum_{|u|=1}\xi_{u}^{p_{0}}$ and
$\mathbb{E}_{\pi}$ be the expectation with respect to
$\int_{\Rp}\mathbb{P}_{s}(dw)\pi (ds)$. Therefore,
$$\mathbb{E}_{\pi}(\overline{\xi}\log^{+}\overline{\xi})=\int_{\Rp}\mathbb{E}_{x}
\left(\sum_{i=1}^{\infty}
\xi_{i}^{p_{0}}\left(\log^{+}\sum_{j=1}^{\infty}
\xi_{j}^{p_{0}}\right)\right)\pi (dx),$$ and it follows readily from
the Malthusian hypotheses and the fact that
$\sum_{|u|=n}\xi_{u}^{pp_{0}}$ is a supermartingale, that this
quantity is finite. Therefore the assumption of  Theorem 6.1 of
\cite{jag} are check, which gives by Theorem 6.1 of  \cite{jag} the
second point and by Theorem 6.3 of  \cite{jag} we get the third
point.
\end{proof}
\end{appendix}

\bigskip
\noindent{\bf Acknowledgements: } I wish to thank J. Bertoin for his
help and suggestions. I also wish to thank the anonymous referees of
an  earlier draft for their detailed comments and suggestions.

\bigskip

 \end{document}